\theoremstyle{plain}
\newtheorem{theorem}{Theorem}[section]
\newtheorem{lemma}[theorem]{Lemma}
\newtheorem{conjecture}[theorem]{Conjecture}
\theoremstyle{definition}
\newtheorem{definition}[theorem]{Definition}
\theoremstyle{remark}
\newtheorem{remark}[theorem]{Remark}
\newtheorem{example}[theorem]{Example}
\DeclareMathOperator{\ct}{ct}
\DeclareMathOperator{\lct}{lct}
\DeclareMathOperator{\mult}{mult}
\DeclareMathOperator{\lcm}{l.c.m.}
\newcommand{\canset}{\mathcal{T}^\text{can}}
\newcommand{\lcset}{\mathcal{T}^\text{lc}}
\newcommand{\newton}{\mathcal{N}^{+}}
\newcommand{\C}{\mathbb{C}}
\newcommand{\R}{\mathbb{R}}
\newcommand{\Q}{\mathbb{Q}}
\newcommand{\Z}{\mathbb{Z}}
\begin{document}
\title[Canonical thresholds]{Smooth 3-dimensional canonical thresholds}
\dedicatory{Dedicated to the memory of my advisor \\
            Vasilii Alekseevich Iskovskikh}
\author{D.~A. Stepanov}
\address{The Department of Mathematical Modeling \\
         Bauman Moscow State Technical University \\
         2-ya Baumanskaya ul. 5, Moscow 105005, Russia}
\email{dstepanov@bmstu.ru}
\thanks{The research was supported by the Russian Grant for Scientific 
        Schools 1987.2008.1 and by the Russian Program for Development of
        Scientific Potential of the High School 2.1.1/227}
\date{}
\begin{abstract}
If $X$ is an algebraic variety with at worst canonical singularities and
$S$ is a $\Q$-Cartier hypersurface in $X$, the canonical threshold of the
pair $(X,S)$ is the supremum of $c\in\R$ such that the pair $(X,cS)$ is
canonical. We show that the set of all possible canonical thresholds of
the pairs $(X,S)$, where $X$ is a germ of smooth 3-dimensional variety,
satisfies the ascending chain condition. We also deduce a formula for
the canonical threshold of $(\C^3,S)$, where S is a Brieskorn singularity.
\end{abstract}
\maketitle

\section{Introduction}
Let $P\in X$ be a germ of a complex algebraic variety $X$ with at worst 
canonical singularities. Let $S$ be a hypersurface (not necessarily 
irreducible or reduced) in $X$ which is $\Q$-Cartier, i.~e., for some
integer $r$ the divisor $rS$ can locally be defined on $X$ by one equation.
\begin{definition}\label{D:cantr}
The \emph{canonical threshold} of the pair $(X,S)$ is
$$\ct_P(X,S)=\sup\{c\in\R\,|\,\text{the pair }(X,cS)
\text{ is canonical}\}\,.$$
\end{definition}
If we require in the above definition the variety $X$ and the pair $(X,cS)$
to be log canonical, we get the analogous notion of \emph{log canonical
threshold} $\lct_P(X,S)$ which is perhaps better known (see, e.~g., 
\cite{Singofpairs}, Sections 8, 9, 10). In the same way as it is done for
the log canonical threshold, considering an appropriate resolution of
singularities of $(X,S)$ one shows that the number $\ct_P(X,S)$ is rational
and $\ct_P(X,S)\in [0,1]\cap\Q$.
\begin{definition}
The \emph{set of $n$-dimensional canonical thresholds} is the set
$$\canset_n=\{\ct_P(X,S)\,|\,\dim X=n\}\,,$$
where $(X,S)$ varies over all pairs satisfying conditions of 
Definition~\ref{D:cantr}.
\end{definition}
We shall denote by $\lcset_n$ the corresponding \emph{set of n-dimensional 
log canonical thresholds}. The following conjecture is very important for
the Minimal Model Program. Its first part was formulated by V.~V. Shokurov 
and the second by J. Koll\'ar. 
\begin{conjecture}\label{C:lc}
\begin{description}
\item[(i)] The set $\lcset_n$ satisfies the ascending chain condition (ACC);
\item[(ii)] the set of accumulation points of $\lcset_n$ is 
$\lcset_{n-1}\setminus\{1\}$.
\end{description}
\end{conjecture}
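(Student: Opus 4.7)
My plan is to attack both parts of Conjecture~\ref{C:lc} simultaneously by induction on the dimension~$n$, since the accumulation-point description in dimension $n$ feeds naturally into the ACC statement in dimension $n+1$. The base case $n=1$ is immediate, for canonical singularities in dimension one are smooth points, so $\lcset_1=\{1/k:k\in\Z_{\ge 1}\}$, which evidently satisfies ACC and has $\{0\}$ as its unique accumulation point.

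For the inductive step on part~(i), assume for contradiction an infinite strictly increasing sequence $c_1<c_2<\cdots$ in $\lcset_n$, realised by germs $(X_i,S_i)\ni P_i$ with $c_i=\lct_{P_i}(X_i,S_i)$. For each $i$ pick a prime divisor $E_i$ over $X_i$ computing $c_i$, so that the log discrepancy of $(X_i,c_iS_i)$ along $E_i$ vanishes; by the minimal model program one can extract $E_i$ via a divisorial contraction $f_i\colon X'_i\to X_i$ to obtain a log canonical pair whose distinguished exceptional prime carries coefficient~$1$. The key step is adjunction: the restriction $(K_{X'_i}+E_i+c_iS'_i)|_{E_i}=K_{E_i}+\mathrm{Diff}_{E_i}(c_iS'_i)$ is a log canonical pair of dimension $n-1$, and its coefficients depend rationally on $c_i$ and on the local invariants of $E_i$. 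By the inductive ACC the coefficients that appear in such a different lie in a DCC set; after passing to a subsequence one may assume the combinatorial type of $(E_i,\mathrm{Diff}_{E_i})$ stabilises, and then solving for $c_i$ in terms of the stabilised coefficients forces the $c_i$ to be eventually constant, contradicting strict monotonicity.

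For part~(ii), the inclusion $\lcset_{n-1}\setminus\{1\}\subseteq\{\text{accumulation points of }\lcset_n\}$ is the easier direction: given $t=\lct_Q(Y,T)$ with $T=\{g=0\}$ and $t<1$, the thresholds $\lct_{(Q,0)}(Y\times\C,\{g+z^k=0\})$ form a monotone sequence tending to~$t$ as $k\to\infty$. The reverse inclusion follows formally from part~(i) together with the adjunction argument above, which converts any convergent strictly monotone sequence of $n$-dimensional thresholds into a sequence of $(n-1)$-dimensional thresholds with the same limit.

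The principal obstacle is not the adjunction bookkeeping but the boundedness input that allows one to conclude that only finitely many combinatorial types of restricted pairs $(E_i,\mathrm{Diff}_{E_i})$ occur along an accumulating subsequence. One needs a uniform bound, independent of~$i$, on the discrepancies and multiplicities of the extracted divisor $E_i$; globally this is a BAB-type statement that weak Fano pairs with bounded log discrepancy form a bounded family. Without such an ingredient the DCC of the restricted coefficients does not translate back into DCC of the original thresholds. In the smooth three-dimensional setting one may hope to bypass this global boundedness by exploiting the explicit description of divisorial valuations on $\C^3$ by weighted blow-ups, reducing the whole question to a combinatorial analysis on the Newton polyhedron of $S$; this is presumably the direction the paper will pursue for the $\canset_3$ analogue.
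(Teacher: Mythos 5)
The statement you were asked about is not proved in the paper at all: it is Conjecture~\ref{C:lc}, attributed to Shokurov (part (i)) and Koll\'ar (part (ii)), and the author explicitly records that it is open in general. The paper only establishes a restricted analogue for \emph{canonical} thresholds on smooth three-dimensional germs (Theorem~\ref{T:ACC}), and it does so by an entirely different, elementary route: reduction via the MMP and Kawakita's classification to weighted blow-ups, and then a combinatorial ACC statement for extended Newton diagrams (Lemmas~\ref{L:ACC} and~\ref{L:monoton}). So there is no ``paper proof'' to compare yours against; at best your last paragraph correctly anticipates the Newton-polyhedron strategy the paper uses for its special case.

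As a proof of Conjecture~\ref{C:lc} itself, your proposal has a genuine gap, and you name it yourself: the inductive adjunction argument only closes if you know that the coefficients appearing in the differents $\mathrm{Diff}_{E_i}$ lie in a set with the appropriate chain condition \emph{and} that only finitely many ``combinatorial types'' of restricted pairs occur along the sequence; the latter is a boundedness (BAB-type) input that you do not supply and that cannot be extracted from the inductive hypothesis alone. Passing to a subsequence where the different ``stabilises'' is precisely the point at issue, not a harmless reduction: the multiplicities entering $\mathrm{Diff}_{E_i}$ depend on the extracted valuations $E_i$, over which you have no a priori control. Likewise, the claim that the hard inclusion in part (ii) ``follows formally from part (i) together with the adjunction argument'' is not correct as stated; relating accumulation points in dimension $n$ to thresholds in dimension $n-1$ required substantial additional work even in the smooth case (de Fernex--Musta\c{t}\u{a}, Koll\'ar, cited in the paper). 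Your sketch is a reasonable outline of the strategy that eventually succeeded in the literature, but as written it is an announcement of that strategy, not a proof, and the paper under review neither claims nor contains one.
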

An analog of Conjecture~\ref{C:lc} (i) for canonical thresholds is
\begin{conjecture}\label{C:can}
The set $\canset_n$ satisfies ACC.
\end{conjecture}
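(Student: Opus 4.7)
The plan is to attempt an induction on $n$, in parallel with the strategy used to attack Conjecture~\ref{C:lc}(i). The base case $n=1$ is essentially trivial: a one-dimensional canonical germ is smooth, so $S=mP$ and $\ct_P(X,S)=1/m$, whence $\canset_1\subseteq\{1/m\mid m\in\Z_{>0}\}\cup\{0\}$, which manifestly satisfies ACC (any strictly increasing chain $1/m_1<1/m_2<\dots$ corresponds to a strictly decreasing chain of positive integers).

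For the inductive step I would pass to the divisorial valuation formulation
$$\ct_P(X,S)=\inf_E\frac{a(E,X)}{\mult_E S},$$
where $E$ ranges over prime divisors over $P$ and $a(E,X)\geq 0$ because $X$ is canonical. Given an infinite strictly increasing sequence $c_i=\ct_{P_i}(X_i,S_i)$, I would attempt to extract, after passing to a subsequence, divisors $E_i$ over $P_i$ computing $c_i$ together with proper birational models $Y_i\to X_i$ extracting $E_i$, and then to show that the triples $(Y_i,E_i,S_i|_{E_i})$ fall into a bounded family. Once boundedness is in hand, a limit of the family produces an $(n-1)$-dimensional pair whose invariant equals $\lim c_i$, and induction forces the original sequence to terminate.

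Boundedness itself splits into two sub-problems. First, one must restrict the set of valuations $E_i$ that can actually compute the threshold; this is the natural place for a Shokurov-style theory of complements or a BAB-type result to intervene, controlling the discrepancies and multiplicities that may appear. Second, one would use adjunction to translate $a(E_i,X_i)=c_i\mult_{E_i}S_i$ into a log canonical threshold computation on $E_i$ against a suitable different, and then invoke Conjecture~\ref{C:lc}(i) in dimension $n-1$ (or its established variants, after Hacon--McKernan--Xu) to conclude ACC for the $c_i$.

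The principal obstacle is precisely this adjunction step: the divisor $E_i$ chosen on the canonical side is typically not a log canonical place of $(X_i,c_iS_i)$, so the classical different is not directly available, and the coefficients produced on $E_i$ must themselves be shown to lie in an ACC set of standard form. Overcoming this seems to require either a refined canonical-threshold analogue of complement theory, or, as in the special case $n=3$ with smooth $X$ treated in this paper, an explicit analysis via weighted blow-ups and Newton polyhedra that sidesteps the abstract machinery entirely. In full generality I would therefore expect a proof of Conjecture~\ref{C:can} to depend on advances in the Minimal Model Program of comparable depth to those underlying Conjecture~\ref{C:lc}(i).
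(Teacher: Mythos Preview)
This statement is a \emph{conjecture} in the paper, not a theorem: the paper does not prove it and explicitly records that it is open in general. What the paper actually establishes is the special case $\canset_{3,\text{smooth}}$ (Theorem~\ref{T:ACC}), and the argument there is entirely different from your sketch. It proceeds by reducing, via Lemma~\ref{L:reduction} and Kawakita's classification (Theorem~\ref{T:Kawakita}), to thresholds computed by weighted blow-ups of $\C^3$, and then deducing ACC from a purely combinatorial fact about extended Newton diagrams (Lemmas~\ref{L:ACC} and~\ref{L:monoton}). No induction on dimension, no adjunction to $E$, and no boundedness of families enters.

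Your proposal, by contrast, is not a proof but a programme, and you say so yourself. The genuine gaps are exactly the ones you flag: you have no mechanism to guarantee that the threshold-computing divisors $E_i$ live in a bounded family when $X_i$ is merely canonical (not smooth, not even $\Q$-factorial terminal), and the adjunction step fails because $E_i$ need not be an lc place of $(X_i,c_iS_i)$, so there is no different to restrict to and no way to feed the problem into the log canonical ACC of Hacon--McKernan--Xu. Even the base case is shakier than you present it: for $n\geq 2$ the inductive hypothesis on $\canset_{n-1}$ would itself be an open conjecture, so the induction never gets off the ground. In short, what you have written is a plausible outline of where a future proof might come from, but it is not a proof, and there is nothing in the paper to compare it against because the paper makes no claim to have proved Conjecture~\ref{C:can}.
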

Conjecture~\ref{C:can} is interesting for applications to birational
geometry (\cite{Corti}). It is also a particular case of conjecture
of C. Birkar and V.~V. Shokurov about $a$-lc thresholds (\cite{BS}, 
Conjecture~1.7). We can not estimate whether the analog of 
Conjecture~\ref{C:lc} (ii) for canonical thresholds would be plausible.
\begin{remark}
The log canonical threshold $\lct_P(X,S)$ can be studied not only from
algebraic geomtry point of view. It has interpretations in terms of 
convergence of some integrals, Bernstein-Sato polynomials etc. (see 
\cite{Singofpairs}, \cite{Kollar2}). It would be interesting to find similar 
interpretations for the canonical threshold.
\end{remark}

As far as we know, all the conjectures are still open in their general form.
However, some important cases have been established. Let us denote
by $\lcset_{n,\text{smooth}}$ the \emph{set} $\{\lct_P(X,S)\,|\,\dim X=n, 
X\text{ is smooth}\}$ \emph{of smooth $n$-dimensional log canonical 
thresholds}. T. de Fernex and M. Musta\c{t}\u{a} (\cite{dFM}) and
J. Koll\'ar (\cite{Kollar2}) proved the analog of Conjecture~\ref{C:lc}, 
(ii) for the set $\lcset_{n,\text{smooth}}$. A theorem of T. de Fernex, 
L. Ein and M. Musta\c{t}\u{a} (\cite{dFM2}) states ACC for the set 
$\lcset_{n,\text{smooth}}$ for any $n$ (actually some rectricted types
of singularities of $X$ are allowed). Concerning canonical thresholds, 
there is a theorem of Yu.~G. Prokhorov (\cite{Prokhorov}, Theorem~1.4) 
describing the upper part of the set $\canset_{3}$ of 3-dimensional 
canonical thresholds.
\begin{theorem}[Prokhorov]\label{T:5/6}
If $c=\ct_P(X,S)$ for some 3-dimensional variety $X$ and $c\ne 1$,
then $c\leq 5/6$ and the bound is attained. Moreover, if $X$ is singular,
then $c\leq 4/5$ and the bound is attained.
\end{theorem}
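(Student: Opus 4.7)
On any log resolution $\pi\colon Y\to X$ of $(X,S)$, writing $K_Y=\pi^*K_X+\sum a_iE_i$ and $\pi^*S=\widetilde S+\sum m_iE_i$, one has
\[
\ct_P(X,S)=\min_{\pi(E_i)=P,\;m_i>0}\frac{a_i}{m_i},
\]
so the task is to bound this ratio by $5/6$ (resp.\ $4/5$) whenever it is strictly $<1$. For $X$ smooth, the ordinary blowup of $P$ extracts $E_0$ with $a(E_0;X)=2$ and $\mult_{E_0}S=\mult_PS$: if $\mult_PS\geq 3$ one already gets $c\leq 2/3$, and if $\mult_PS\leq 1$ then $S$ is smooth at $P$ and $c=1$ (which is excluded). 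The substantial case is therefore $X$ smooth with $\mult_PS=2$.

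In that case, I would work with weighted blowups $\sigma_w$ of $P$ at primitive weights $w=(w_1,w_2,w_3)$: the extracted divisor $E_w$ has $a(E_w;\C^3)=w_1+w_2+w_3-1$ and $\mult_{E_w}S$ equal to the $w$-weighted order of an equation $f$ defining $S$. The analysis splits on the rank of the quadratic part $f_2$. If $\mathrm{rank}\,f_2\geq 2$, then after the ordinary blowup the strict transform $\widetilde S$ is smooth away from a curve (or a few points) on the exceptional $\mathbb{P}^2$; any divisor realizing a ratio $<1$ arises from blowing up such a curve or a point on it, and a normal-form analysis (types $xy+z^k$, $x^2+z^k$, $\dots$) yields $a/m\leq 5/6$. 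If $\mathrm{rank}\,f_2=1$, put $f$ into the normal form $x^2+g(y,z)$ and use the Newton polyhedron $\newton(g)$: the relevant weighted blowups correspond to the compact faces of $\newton(g)$, and on each such face
\[
\frac{a_w}{m_w}=\frac{w_1+w_2+w_3-1}{\min(2w_1,\mathrm{ord}_w g)};
\]
a direct face-by-face estimate gives $a/m\leq 5/6$, with equality attained at the Brieskorn pair $f=x^2+y^3+z^6$ with weights $(3,2,1)$, where $(a,m)=(5,6)$.

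For singular $X$, I would invoke the classification of 3-dimensional canonical singularities (compound Du Val and, more generally, Reid's list): each such $P\in X$ admits a divisorial extraction with exceptional discrepancy $1/r$ for some $r\geq 2$. Using such an extraction in place of the ordinary blowup and repeating the enumeration above — now with $\mult_ES\in\tfrac1r\Z$ — tightens the bound to $c\leq 4/5$, realized by an explicit pair $(X,S)$ with $X$ a terminal cyclic quotient and $S$ a suitable Weil divisor. The main obstacle throughout is the combinatorial enumeration in the case $\mult_PS=2$ with $\mathrm{rank}\,f_2\leq 2$: one must systematically exclude any iterated weighted blowup (including toric refinements along singular curves of successive strict transforms) that produces a ratio strictly between $5/6$ and $1$, and identify exactly the extremal face. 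Enumerating the compact faces of $\newton(g)$ gives the right combinatorial control, but the face-by-face verification and the matching with the Brieskorn extremum is the technical heart of the argument.
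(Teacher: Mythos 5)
First, note that the paper does not prove this statement at all: it is quoted from Prokhorov (\cite{Prokhorov}, Theorem~1.4), and the closest in-paper argument is the proof of the refinement Theorem~\ref{T:4/5}, which rests on the reduction of subsection~\ref{SS:reduction} (MMP plus Kawakita's Theorem~\ref{T:Kawakita}), the monotonicity Lemma~\ref{L:monoton}, and a Du Val dichotomy. Measured against what a proof actually requires, your proposal has genuine gaps rather than being a proof in outline. The opening identity $\ct_P(X,S)=\min_{\pi(E_i)=P}a_i/m_i$ is false in general (divisors over curves through $P$ also constrain the threshold); this is harmless for upper bounds but is stated as an equality. More seriously, in the central case $\mult_P S=2$, $\mathrm{rank}\,f_2=1$, the asserted ``direct face-by-face estimate gives $a/m\le 5/6$'' is simply not true as stated: for $f=x^2+y^3+z^5$ (an $E_8$ point) every weight gives ratio $\ge 1$ and $\ct=1$. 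What must actually be proved is a dichotomy: either $\Gamma^+(f)$ lies above the plane $3\alpha+2\beta+\gamma=6$, in which case comparison with $x^2+y^3+z^6$ gives $\ct\le 5/6$, or $f$ has one of the finitely many monomials below that plane, in which case one must show $f$ is Du Val and then \emph{prove} $\ct=1$ (this implication also needs an argument). This dichotomy is precisely what excludes thresholds in $(5/6,1)$, and it is exactly the part you defer as ``the technical heart''; without it there is no proof. The $\mathrm{rank}\,f_2\ge 2$ discussion is likewise off: such points are $A_n$ and have $\ct=1$, so there is nothing there to bound by $5/6$.

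Two further items are missing entirely. Attainment of $5/6$ requires a \emph{lower} bound $\ct_0(\C^3,\{x^2+y^3+z^6=0\})\ge 5/6$, which single divisors cannot give; one needs either nondegeneracy and a toric embedded resolution (as in Section~\ref{S:Brieskorn}), or the reduction to weighted blow ups via MMP and Theorem~\ref{T:Kawakita}, or a direct discrepancy computation of the kind the paper performs for $x^2+y^3+yz^n$ — your sketch supplies none of these. The singular case is only a declaration of intent: asserting that repeating the enumeration with $\mult_E S\in\tfrac1r\Z$ ``tightens the bound to $4/5$'' is not an argument. You would need to control discrepancies of extractions over singular canonical (not just terminal cyclic quotient) points, explain quantitatively why the bound drops from $5/6$ to $4/5$, and exhibit a specific singular pair together with a proved lower bound showing $4/5$ is attained. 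As it stands, the proposal identifies the correct objects (weighted blow ups, Newton faces, Brieskorn comparisons) but leaves unproved every step on which the theorem actually turns.
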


In this paper we also restrict ourselves to a particular case of 
Conjecture~\ref{C:can}. Namely, we prove it for smooth 3-dimensional 
germs $P\in X$. Let
$$\canset_{3,\text{smooth}}=\{\ct_P(X,S)\,|\,\dim X=3, 
X\text{ is smooth}\}$$
be the set of smooth 3-dimensional canonical thresholds. Our main result
is the following.
\begin{theorem}\label{T:ACC}
The set $\canset_{3,\text{smooth}}$ satisfies the ascending chain condition.
\end{theorem}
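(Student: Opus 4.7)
Let $(X,P)=(\C^3,0)$ and $S=\{f=0\}$. Via any log resolution $\pi\colon Y\to X$ of $(X,S)$ one has
$$\ct_P(X,S)=\min_{E}\frac{a(E)}{\operatorname{ord}_E(f)},$$
where $E$ ranges over exceptional divisors with $\pi(E)=P$. The plan is to argue by contradiction: suppose $c_1<c_2<\cdots$ is a strictly increasing sequence in $\canset_{3,\text{smooth}}$ with supremum $c^\ast$. Each $c_k<1$, so Theorem~\ref{T:5/6} gives $c_k\leq 5/6$, whence $c^\ast\leq 5/6$ and the $c_k$ eventually lie in a compact subset of $(0,1)$; this boundedness is the input for everything that follows.

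\textbf{Reduction to weighted blow-ups.} Write $(X_k,S_k,P_k)=(\C^3,\{f_k=0\},0)$ and, for each $k$, fix a divisor $E_k$ over $P_k$ realizing the minimum, so that $c_k=a(E_k)/\operatorname{ord}_{E_k}(f_k)$. I would then invoke the fact that any divisorial valuation with small log discrepancy on a smooth 3-fold germ can be extracted by a finite tower of weighted blow-ups with smooth centers starting from $P_k$ (this is in the spirit of Kawakita's classification of divisors of small log discrepancy). The relevant combinatorial data attached to $E_k$ is the sequence of weights $(a_i,b_i,c_i)$ at each stage together with the chain of successive centers.

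\textbf{Boundedness and ACC.} In the simplest case, where $E_k$ is extracted by a single weighted blow-up at $P_k$ with coprime weights $(a,b,c)$, one has $a(E_k)=a+b+c-1$ and $\operatorname{ord}_{E_k}(f_k)=\min\{ai+bj+ck:(i,j,k)\in\operatorname{supp}(f_k)\}$. The double inequality $\epsilon\leq(a+b+c-1)/\operatorname{ord}_{E_k}(f_k)\leq 5/6$, combined with the elementary bound $\operatorname{ord}_{E_k}(f_k)\leq \max(a,b,c)\cdot\mult_{P_k}(f_k)$, restricts the weights and the multiplicity to finite ranges and forces the $(a,b,c)$-initial form of $f_k$ to lie in a finite combinatorial family. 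Consequently the achievable values of $c_k$ are rationals with bounded denominator, forming a discrete subset of $\Q$, which contradicts $c_k\nearrow c^\ast$.

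\textbf{Main obstacle.} The serious difficulty is the iterated case, in which the minimum-achieving divisor $E_k$ lies deeper in the resolution tower: after the first weighted blow-up $\sigma\colon X'\to X$, the strict transform $\tilde S_k$ may produce new singularities whose canonical-threshold-computing divisor requires further blow-ups, and the weights of subsequent stages can in principle grow. I would handle this by induction on the depth of the extracting chain, using at each stage that the canonical threshold of $(X',\tilde S_k)$ at the new base point is bounded below in terms of $c_k$ (otherwise $c_k$ itself would fall below $\epsilon$), which in turn constrains the admissible weights of the next blow-up. After finitely many steps the combinatorial data stabilizes and the same discreteness argument concludes. The explicit Brieskorn formula for $\ct(\C^3,\{x^a+y^b+z^c=0\})$ promised in the abstract is pertinent here: Brieskorn singularities realize the high canonical thresholds through a single weighted blow-up and thus serve as both motivation and key test case for this combinatorial analysis.
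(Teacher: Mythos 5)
The quantitative core of your argument is false, so there is a genuine gap. From $\epsilon\le (a+b+c-1)/w(f)\le 5/6$ together with $w(f)\le\max(a,b,c)\cdot\mult_0(f)$ you cannot confine the weights to a finite range: after bounding $\mult_0(f)$ (say by $2/\epsilon$, using the weight $(1,1,1)$), the resulting inequality has both sides growing linearly in $\max(a,b,c)$. Concretely, take $S_N=\{x^2+y^N+z^{c}=0\}$ with $N\ge 3$ odd and $c\ge 2N$; by the Brieskorn formula of Section~\ref{S:Brieskorn}, $\ct_0(\C^3,S_N)=1/2+1/N$, realized by the weighted blow up with weights $(N,2,1)$. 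These thresholds all lie in $[1/2,5/6]$, yet the weights and the denominators are unbounded and the values accumulate at $1/2$. Hence $\canset_{3,\text{smooth}}\cap[\epsilon,5/6]$ is not discrete, and no argument via ``rationals with bounded denominator'' can succeed: ACC holds because accumulation happens only from above, not because the set is locally finite. Your reduction step is also shaky: Kawakita's Theorem~\ref{T:Kawakita} classifies extremal divisorial contractions to smooth points (a single weighted blow up in suitable coordinates); it does not say that an arbitrary divisor computing the threshold is reached by a tower of weighted blow ups, and the claim that such a tower's ``combinatorial data stabilizes'' after finitely many steps is unsubstantiated.

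The paper's route is different and avoids all boundedness questions. Lemma~\ref{L:reduction} runs the relative MMP on a log resolution to replace the chosen divisor by the exceptional divisor of an extremal divisorial contraction that still realizes the threshold; by Mori's classification (center a curve, giving thresholds $1/n$) or Kawakita's theorem (center the point), this contraction is a single weighted blow up, so $\ct_0(\C^3,S)$ is computed by formula~\eqref{E:ct} and depends only on the extended Newton diagram $\Gamma^+(f)$. ACC then follows from two purely combinatorial facts: any infinite sequence of extended Newton diagrams contains a non-increasing subsequence (Lemma~\ref{L:ACC}, a Dickson-type finiteness statement for ideals of the semigroup $\Z_{\geq 0}^{n}$), and $\ct$ is monotone under inclusion of diagrams (Lemma~\ref{L:monoton}). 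A strictly increasing sequence of thresholds would yield diagrams with no inclusion $\Gamma_i^{+}\supseteq\Gamma_j^{+}$ for $i<j$, contradicting Lemma~\ref{L:ACC}. This well-quasi-order argument is the idea your proposal is missing; no bound on weights, multiplicities, or denominators is needed, and indeed none is available.
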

It is not difficult to check that $\canset_{n-1,\text{smooth}}\subseteq
\canset_{n,\text{smooth}}$, so if we prove ACC in some dimension, we
automatically have it in all smaller dimensions. The proof of 
Theorem~\ref{T:ACC} is contained in Section~\ref{S:ACC}. Its main point is 
M. Kawakita's classification of 3-dimensional contractions to smooth points
(\cite{Kawakita}). In that section we assume some familiarity of the reader
with the Minimal Model Program (\cite{Matsuki}). In Section~\ref{S:Brieskorn}
we deduce a formula for the canonical threshold of a 3-dimensional Brieskorn 
singularity. It is interesting that it turns out to be much more cumbersome 
than the corresponding formula for the log canonical threshold. In 
Section~\ref{S:4/5} we slightly strengthen Theorem~\ref{T:5/6} by showing
that there are no 3-dimensional canonical thresholds between $4/5$ and 
$5/6$ (see Theorem~\ref{T:4/5}).

The author is grateful to Yu.~G. Prokhorov for attracting author's attention
to canonical thresholds and for useful suggestions during our work.
\pagebreak

\section{Ascending chain condition for smooth 3-dimensional canonical
thresholds}\label{S:ACC}

\subsection{Reduction to extremal contraction}\label{SS:reduction}
Let $P\in X$ be a germ of terminal $\Q$-factorial complex 3-dimensional 
algebraic variety and $S$ an effective integer divisor on $X$ such that the 
pair $(X,S)$ is not terminal. Let $g\colon\widetilde{X}\to X$ be an embedded 
resolution of the pair $(X,S)$. We denote by $E_i$, $i\in I$, the prime 
exceptional divisors of $g$ and by $\widetilde{S}$ the strict transform of 
$S$. Then we can write the relations
$$K_{\widetilde{X}}=g^* K_X+\sum_{i\in I} a_i E_i\,,\quad
g^*S=\widetilde{S}+\sum_{i\in I} b_i E_i$$
for some rational numbers $a_i$, $b_i$. Here $K_{\widetilde{X}}$ and $K_X$ 
stand for the canonical classes of $\widetilde{X}$ and $X$ respectively.
Now let $c\in\Q$ be the canonical threshold of the pair $(X,S)$. This
means that the pair $(X,cS)$ is canonical, i.~e., if we write
$$K_{\widetilde{X}}+c\widetilde{S}=g^*(K_X+cS)+
\sum_{i\in I}(a_i-cb_i)E_i\,,$$
then $a_i-cb_i\geq 0$ for all $i\in I$. This implies the estimate
$c\leq\frac{a_i}{b_i}$ for all $i$, and, since we assume that $c$ is the 
threshold, the equality
$$c=\ct_P(X,S)=\min\limits_{i\in I} \frac{a_i}{b_i}\,.$$
(We can always assume that the resolution $g$ has at least $1$ exceptional 
divisor with discrepancy $0$ over $(X,cS)$; this follows from 
\cite{Singofpairs}, Corollary~3.13.) This shows, in particular, that $c$ is 
indeed rational. If for a divisor $E_i$ we have $a_i-cb_i=0$, we shall say 
that $E_i$ (considered as a discrete valuation of the field $\C(X)$ of 
rational functions on $X$) \emph{realizes the canonical threshold} for the 
pair $(X,S)$. 

\begin{lemma}[cf. \cite{Complements}, Section 3]\label{L:reduction}
Let $c$ be the canonical threshold of a pair $(X,S)$ with terminal 
$\Q$-factorial 3-dimensional germ $P\in X$. Then there exists an 
extremal divisorial contraction $g'\colon X'\supset E'\to X\ni P$ such that 
its exceptional divisor $E'$ realizes the threshold $c$. (We shall also say 
that the threshold $c$ is achieved on the contraction $g'$).
\end{lemma}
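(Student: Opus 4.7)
The plan is to realise the threshold-achieving valuation as the exceptional divisor of a single extremal contraction by running a relative log MMP starting from the embedded resolution $g\colon\widetilde X\to X$ of the excerpt. Fix one $g$-exceptional divisor $E_k$ with $a_k-cb_k=0$, and choose small positive rationals $\delta_j$ for $j\in I\setminus\{k\}$. Form the boundary
$$\Delta=c\widetilde S+\sum_{j\ne k}\delta_j E_j.$$
Since $\widetilde S+\sum_i E_i$ has simple normal crossings and all coefficients are strictly less than~$1$, the pair $(\widetilde X,\Delta)$ is klt. Combining this definition with the two relations of the excerpt,
$$K_{\widetilde X}+\Delta=g^*(K_X+cS)+F,\qquad F=\sum_{j\ne k}\bigl(a_j-cb_j+\delta_j\bigr)E_j,$$
so $F$ is effective, $g$-exceptional, supported on $\bigcup_{j\ne k}E_j$, and has strictly positive coefficient on every exceptional divisor distinct from $E_k$. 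The perturbation by the $\delta_j$ is inserted precisely to cover the case where the threshold is attained simultaneously by several divisors.

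Next, run the $(K_{\widetilde X}+\Delta)$-MMP over $X$. By the negativity lemma, the effective $g$-exceptional divisor $F$ has a curve $C$ with $F\cdot C<0$, and any such curve lies in $\mathrm{Supp}(F)=\bigcup_{j\ne k}E_j$; consequently every extremal contraction or flip encountered in the MMP occurs along curves in this union and leaves the generic point of $E_k$ untouched, so the strict transform of $E_k$ persists throughout. Termination of the three-dimensional log MMP yields a $\Q$-factorial model $f\colon X'\to X$ on which the pushforward of $F$ vanishes, forcing every $E_j$ with $j\ne k$ to be contracted. Since $X$ is terminal and the remaining exceptional divisor $E'$ (the strict transform of $E_k$) has positive discrepancy $a_k$, the variety $X'$ is again terminal; having a single exceptional prime divisor on the $\Q$-factorial $X'$ over the $\Q$-factorial base $X$ forces $\rho(X'/X)=1$; and a final application of the negativity lemma to the effective $f$-exceptional divisor $E'$ shows that $-E'$ is $f$-ample. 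Therefore $f$ is an extremal divisorial contraction, and since $E'$ defines the same valuation of $\C(X)$ as $E_k$, it realises the canonical threshold $c$.

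The delicate point is ensuring that $E_k$ is preserved by the MMP. The key device, which is precisely the zero-coefficient argument of Section~3 of~\cite{Complements} cited in the lemma, is that the coefficient of $E_k$ in the effective discrepancy divisor $F$ is \emph{exactly} zero; this confines every negative extremal ray to curves in $\mathrm{Supp}(F)$, which is disjoint from the generic point of $E_k$. The other technical input, termination of the relevant three-dimensional log MMP together with preservation of terminal $\Q$-factoriality, is standard for the three-dimensional MMP and will simply be invoked.
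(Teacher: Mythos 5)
Your reduction via a single relative MMP is close in spirit to the paper's argument, and most of it is sound: the perturbation by the $\delta_j$ correctly forces the final model to have exactly one exceptional divisor (the strict transform $E'$ of $E_k$), the support argument shows $E_k$ is never contracted, $\rho(X'/X)=1$ follows from $\Q$-factoriality of $X$ and $X'$ together with linear independence of exceptional divisors, and $-E'$ (hence $-K_{X'}\equiv_X -a_kE'$) is $f$-ample. The genuine gap is the claim that $X'$ is \emph{terminal}. Your justification --- ``$X$ is terminal and $E'$ has positive discrepancy $a_k$, hence $X'$ is terminal'' --- is a non sequitur and is false as a general implication: discrepancies over a higher model are smaller, not larger, than over $X$. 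For instance, the weighted blow up of a smooth point of $\C^3$ with weights $(2,3,5)$ has exceptional discrepancy $9>0$ over the terminal base, yet the blown-up variety has a $\frac{1}{3}(1,1,1)$ point, which is canonical but not terminal. More structurally, your $(K_{\widetilde X}+\Delta)$-MMP only preserves the klt/dlt property of the \emph{pair}; it does not preserve terminality of the underlying variety, since a step may be $(K+\Delta)$-negative without being $K$-negative. What your construction does give is $K_{X'}+cS'=f^*(K_X+cS)$, so $(X',cS')$ is canonical and hence $X'$ is canonical; but a crepant divisor of $(X,cS)$ other than $E_k$ whose center on $X'$ lies in $E'$ but outside $S'$ would make $X'$ strictly canonical, and nothing in your argument excludes this. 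Since condition (i) in the paper's definition of an extremal divisorial contraction, and the subsequent application of Kawakita's theorem, require $X'$ to be terminal, this is not a cosmetic omission.

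The paper circumvents exactly this difficulty with a two-stage MMP: first it runs the $(K_{\widetilde X}+c\widetilde S)$-MMP over $X$, reaching $\widehat X$ on which the pair $(\widehat X,c\widehat S)$ is terminal (in particular $\widehat X$ is terminal, and all divisors with positive discrepancy over $(X,cS)$ have been contracted); then it runs the plain $K_{\widehat X}$-MMP over $X$, whose steps do preserve terminality of the variety, and takes its last divisorial contraction $g'\colon X'\to X$. Every divisor contracted in the second stage is crepant for $(X,cS)$, so $E'$ realizes $c$, and $X'$ is terminal by construction. Note also that in the paper's proof the surviving crepant divisor is whatever the MMP leaves last, not one fixed in advance; your device of fixing $E_k$ and perturbing by $\delta_j$ is what forces the problem, because not every crepant valuation need be extracted by a terminal model. (A minor further point: if $c=1$, or $S$ is non-reduced, the coefficients of $\Delta$ can equal $1$, so the pair is only dlt rather than klt; this is harmless for the $3$-dimensional MMP but your ``strictly less than $1$'' is not literally correct.)
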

\begin{proof}
We use the notation introduced before the lemma. Let us apply to 
$\widetilde{X}$ the $(K_{\widetilde{X}}+c\widetilde{S})$-Minimal Model 
Program (MMP) relative over $X$ (see \cite{Matsuki}, Ch.~11). It stops with 
a $\Q$-factorial variety $\widehat{X}$ such that the pair 
$(\widehat{X},c\widehat{S})$, where $\widehat{S}$ is the strict transform of 
$\widetilde{S}$, is terminal. Actually MMP contracts all the exceptional 
divisors of $g$ which have positive discrepancies over $(X,cS)$. Next we 
apply $K_{\widehat{X}}$-MMP over $X$ to $\widehat{X}$. It contracts all the 
exceptional divisors which remain in $\widehat{X}$ and stops with the 
variety $X$. Since $X$ was supposed to be $\Q$-factorial, the last step of 
$K_{\widehat{X}}$-MMP is an extremal divisorial contraction 
$g'\colon X'\to X$ from some $\Q$-factorial terminal variety $X'$. Let $E'$ 
be the exceptional divisor of $g'$. All the divisors that we contract on 
this stage have discrepancy $0$ over $(X,cS)$, thus $E'$ realizes the 
canonical threshold of $(X,S)$.
\end{proof}

For the rest of this section we assume $X$ to be smooth. The variety $X'$ 
obtained in Lemma~\ref{L:reduction} is $\Q$-factorial, so we again can write
\begin{equation}\label{E:discrepancy}
K_{X'}={g'}^*K_X+a'E'\,,\quad {g'}^*S=S'+b'E'\,,
\end{equation}
and since the canonical threshold $c$ is realized by $E'$, we have
$c=a'/b'$. This reduces the calculation of the canonical threshold in
the smooth 3-dimensional case to an \emph{extremal divisorial contraction},
i.~e., to a morphism with connected fibers $g'\colon X'\to X$ subject to
the following conditions: 
\begin{description}
\item[(i)] $X'$ is $\Q$-factorial with only terminal singularities;
\item[(ii)] the exceptional locus of $g'$ is a prime divisor;
\item[(iii)] $-K_{X'}$ is $g'$-ample;
\item[(iv)] the relative Picard number of $g'$ is $1$.
\end{description}

In the 3-dimensional situation $g'$ can contract the divisor $E'$ either 
onto a curve $C\subset X$ or to the point $P\in X$. In the first case
it follows from Mori's classification of smooth extremal contractions
(\cite{Mori}) that at a generic point of $C$ the morphism $g'$ is 
isomorphic to an ordinary blow up of $X$ at the curve $C$. Then it follows 
from \eqref{E:discrepancy} that $\ct_P(X,S)=1/\mult_C(f)$, where 
$\mult_C(f)$ is the multiplicity of the defining function $f$ of $S$ at a 
generic point of $C$. Thus the set $\canset_{3,\text{smooth}}$ contains the 
subset $\{1/n\,|\, n\in\mathbb{N}\}$ which satisfies ACC.

Now suppose that $g'$ contracts the divisor $E'$ to the smooth point
$P\in X$. In this case we have an important result of M. Kawakita
(\cite{Kawakita}, Theorem~1.2) classifying extremal divisorial contractions
to smooth points.
\begin{theorem}[Kawakita]\label{T:Kawakita}
Let $Y$ be a 3-dimensional $\Q$-factorial variety with 
only terminal singularities, and let $g\colon(Y\supset E)\to X\ni P$ be
an algebraic germ of an extremal divisorial contraction which contracts its 
exceptional divisor $E$ to a smooth point $P$. Then we can take local 
parameters $x$, $y$, $z$ at $P$ and coprime positive integers $a$ and $b$
such that $g$ is the weighted blow up of $X$ with its weights $(1,a,b)$.
\end{theorem}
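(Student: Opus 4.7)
The goal is to reconstruct the contraction $g$ from the divisorial valuation $v = v_E$ on $\mathbb{C}(X)$ and match it to a weighted blow-up. The plan is to pick local parameters $u_1,u_2,u_3$ at $P$, set $w_i = v(u_i)$, sort so that $w_1 \le w_2 \le w_3$, and vary the choice of parameters to make the sorted triple lexicographically minimal. The first step is to show $\gcd(w_1,w_2,w_3) = 1$: since $v$ takes all positive integer values, for any common factor $d > 1$ one can find $f \in \mathfrak{m}_P$ with $v(f)$ not divisible by $d$, and expanding $f$ in the $u_i$ and subtracting off leading-order terms contradicts minimality of the triple. By the same style of argument one also forces the mixed monomials appearing in the defining structure to live in a semigroup generated by the $w_i$.

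The decisive step is to show $w_1 = 1$. Here I would exploit terminality of $Y$ and the ampleness of $-K_Y$ over $X$. A weighted blow-up with weights $(w_1,w_2,w_3)$ has discrepancy $w_1 + w_2 + w_3 - 1$, and its exceptional divisor is the weighted projective plane $\mathbb{P}(w_1,w_2,w_3)$, with cyclic quotient singularities at the three torus-fixed points. In our abstract situation, a local analysis of $Y$ at a general point of $E$ identifies $E$ with such a weighted projective plane and reads off the singularity type of $Y$ at each of the three distinguished points. Reid's classification of terminal threefold singularities, combined with the requirement that $-K_Y$ have positive intersection with every curve in $E$, then rules out any triple with $w_1 \ge 2$: the resulting quotients at two of the three charts would fail terminality or violate the extremality of $g$.

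Once $w_1 = 1$ is established, set $a = w_2$, $b = w_3$ with $\gcd(a,b) = 1$, and let $\pi\colon Z \to X$ be the weighted $(1,a,b)$-blow-up in the chosen coordinates. Both $g$ and $\pi$ extract the same divisorial valuation $v$, so there is an induced birational map $\varphi\colon Y \dashrightarrow Z$ over $X$ that is an isomorphism in codimension one. Since $Y$ and $Z$ are both $\mathbb{Q}$-factorial terminal, both have relative Picard number one over $X$, and both have anti-canonical divisor ample over $X$, an application of the negativity lemma (or directly of the cone theorem over $X$) shows that $\varphi$ extends to a biregular isomorphism, giving the desired identification of $g$ with the weighted blow-up.

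The main obstacle is the middle paragraph: pinning down $w_1 = 1$ is a genuine rigidity statement about terminal contractions and is where Kawakita's proof deploys its heaviest machinery, in particular the general elephant technique and detailed bookkeeping with Reid's tables. The first and third paragraphs are expected to be comparatively formal once the local model on $E$ is understood, but the passage from "the valuation is weighted" to "the morphism is the weighted blow-up" still requires the $\mathbb{Q}$-factorial, extremal, terminal hypotheses to be used in full strength.
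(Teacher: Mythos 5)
First, note that the paper does not prove this statement at all: it is quoted verbatim from Kawakita's paper (\cite{Kawakita}, Theorem~1.2) and used as a black box, so there is no internal argument to compare yours with. Judged on its own terms, your proposal is an outline rather than a proof, and the gap sits exactly where you admit it does, but it is worse than a missing computation: the middle paragraph is circular. You ``identify $E$ with a weighted projective plane $\mathbb{P}(w_1,w_2,w_3)$ and read off the singularity type of $Y$ at the three torus-fixed points'' --- but that $E$ is a weighted projective plane, that $Y$ has only three distinguished cyclic quotient points arranged torically, and more fundamentally that the valuation $v_E$ is the \emph{monomial} valuation with weights $(v(u_1),v(u_2),v(u_3))$ in some coordinates, are precisely the content of the theorem. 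Knowing the three values $w_i=v(u_i)$ does not pin down $v$ (compare the valuation of the second exceptional divisor of two successive point blow-ups, which has $v(x)=v(y)=v(z)=1$ but is not a monomial valuation), and a priori $Y$ may have arbitrary terminal singularities (cD, cE points, etc.) along a non-toric $E$. So the Reid--Tai chart analysis you invoke to exclude $w_1\ge 2$ only applies \emph{after} one knows $g$ is a weighted blow-up; it cannot be used to prove it. Kawakita's actual argument gets around this by heavy numerical work with Reid's singular Riemann--Roch formula applied to $-nE$ and the basket of $Y$, not by a local chart inspection, and none of that is reproduced or replaced in your sketch.

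The peripheral parts are sound in spirit but also not quite complete. The first paragraph's claim that the lexicographic-minimality trick forces $\gcd(w_1,w_2,w_3)=1$ and controls ``mixed monomials'' is asserted, not argued, and in any case falls far short of showing $v$ is a weighted valuation. The third paragraph is the one genuinely routine step: once one knows $v$ coincides with the $(1,a,b)$-monomial valuation, the weighted blow-up $\pi\colon Z\to X$ is $\Q$-factorial terminal with $-K_Z$ $\pi$-ample (here $\gcd(a,b)=1$ and the weight $1$ are exactly what Reid--Tai needs in the charts $\C^3/\Z_a(-1,1,-b)$ and $\C^3/\Z_b(-1,-a,1)$), and the standard uniqueness argument via the negativity lemma identifies $g$ with $\pi$ since both extract the same valuation with relative Picard number one. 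But that final step consumes the hard statement as input; as written, the proposal does not prove the theorem, and the paper rightly treats it as an external result to be cited rather than reproved.
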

So further we may assume that the canonical threshold of the pair $(X,S)$ 
is realized by some weighted blow up. Also, since $X$ is smooth and the
canonical threshold can be defined and calculated using analytic germs as 
well, we assume in the sequel that $P\in X$ is isomorphic to $0\in\C^n$ and 
$S$ is determined by a convergent power series $f$.

\subsection{Canonical threshold and weighted blow ups}\label{SS:wblowup}
The affine space $\mathbb{A}_{\C}^{n}$ can be given a structure of a toric 
variety $X(\tau,N)$ where $N=\Z^n$ and the cone $\tau$ is the positive octant
of the real vector space $\R^n\simeq N\bigotimes\R$. Let 
$w=(w_1,\dots,w_n)\in N\cap\tau$ be a primitive vector. The \emph{weighted 
blow up} $\sigma_w$ of the space $\mathbb{A}_{\C}^{n}\simeq\C^n$ with the 
weight vector $w$ is the toric morphism
$$\sigma_w \colon X(\Sigma_w,N)\to \C^n\simeq X(\tau,N)$$
given by the natural subdivision $\Sigma_w$ of the cone $\tau$ with a help
of the vector $w$. Certainly, a weighted blow up depends not only on its 
weights $w$ but also on the choice of a toric structure 
$\mathbb{A}_{\C}^{n}\simeq X(\tau,N)$. The variety $X(\Sigma_w,N)$ is
$\Q$-factorial and can be covered by $n$ affine charts. The $i$th chart
is isomorphic to $\C^n/\Z_{w_i}$ where the cyclic group acts with weights 
$(-w_1,\dots,-w_{i-1},1,-w_{i+1},\dots,-w_n)$, and the morphism $\sigma_w$
is given in this chart by the formulae
$$x_i=y_{i}^{w_i}\,,\quad x_j=y_j y_{i}^{w_j}\,, \; j\ne i$$
where $x_1,\dots,x_n$ are the coordinates on the target and $y_1,\dots,y_n$
on the source space (see, e.~g., \cite{Complements}, 3.7).

Given a hypersurface $S=\{f=0\}$ in $\C^n$, we can estimate the canonical
threshold $\ct_0(\C_n,S)$ with a help of weighted blow ups. Namely, for
any weight vector $w$ (not equal to $0$ or to a vector $e_i$ of the 
standard basis) we have
$$\ct_0(\C^n,S)\leq \frac{w_1+\dots+w_n-1}{w(f)}\,$$
where $w(f)$ is the least weight of a monomial appearing in $f$ with 
respect to the weights $w_1,\dots,w_n$ (see \cite{Prokhorov}). Moreover,
if we know that the canonical threshold of $(\C^n,S)$ is achieved on some
weighted blow up, we can calculate it as
\begin{equation}\label{E:ct}
\ct_0(\C^n,S)=
\min_{w\ne 0,e_i,i=1,\dots,n} \frac{w_1+\dots+w_n-1}{w(f)}\,.
\end{equation}
where the minimum is taken over all integer vectors in $\tau$. It is
possible that the denominator in \eqref{E:ct} is 0 for some weights $w$;
such fractions should be treated as $+\infty$.

Recall that the \emph{extended Newton diagram} $\Gamma^+(f)$ of a 
polynomial (or a series) $f=\sum_m a_m x^m$ is the convex hull in $\R^n$ 
of the set $\{m+\mathbb{R}_{\geq 0}^{n}\,|\,a_m\ne 0\}$. Note that the
denominator $w(f)$ in the above formulae depends only on $\Gamma^+(f)$
but not on $f$ itself. Let us introduce a new set
$\canset_{n,\text{smooth,w}}=$ the set of smooth $n$-dimensional canonical 
thresholds which can be realized by some weighted blow up.
\begin{remark}
The set $\canset_{n,\text{smooth,w}}$ may look a bit artificial. However, 
it contains, for example, the set of smooth $3$-dimensional canonical 
thresholds (for $n=3$; see subsection~\ref{SS:reduction}), or the set of 
canonical thresholds achieved on hypersurfaces $S$ defined by series $f$ 
nondegenerate with respect to their Newton diagrams.
\end{remark}
Now Theorem~\ref{T:ACC} follows from the next result.
\begin{theorem}\label{T:wACC}
The set $\canset_{n,\text{smooth,w}}$ satisfies ACC.
\end{theorem}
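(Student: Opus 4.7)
My plan is to argue by contradiction. Suppose there is a strictly ascending sequence $c_1 < c_2 < \dots$ in $\canset_{n,\text{smooth,w}}$ converging to some $c \leq 1$. For each $k$, fix a convergent series $f_k$ together with a primitive weight $w^{(k)} \neq 0, e_i$ on which the minimum in \eqref{E:ct} is attained, and pick a lattice vertex $m^{(k)}$ of $\Gamma^+(f_k)$ lying on the face where $w^{(k)} \cdot m$ is minimized, so that $c_k = (|w^{(k)}| - 1)/(w^{(k)} \cdot m^{(k)})$.

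The first step is to rule out bounded weights. If $|w^{(k)}|$ stayed bounded, then after extracting a subsequence $w^{(k)}$ would be constant and $w^{(k)}(f_k) = (|w^{(k)}| - 1)/c_k$ would be a positive integer confined to a bounded interval, forcing only finitely many values of $c_k$, a contradiction. Hence $|w^{(k)}| \to \infty$, and after normalizing $v^{(k)} = w^{(k)}/|w^{(k)}|$ I pass to a further subsequence with $v^{(k)} \to v$ in the standard simplex $\Delta^{n-1}$; the identity $v^{(k)} \cdot m^{(k)} = (1 - 1/|w^{(k)}|)/c_k \to 1/c$ then controls the decisive quantity. Testing the minimality of $w^{(k)}$ in \eqref{E:ct} against the weight $(1,\dots,1)$ gives $\mult(f_k) \leq (n-1)/c_1$, a uniform bound; and for each index $i$ with $v_i > 0$, the inequality $v^{(k)}_i m^{(k)}_i \leq v^{(k)} \cdot m^{(k)}$ shows that the coordinate $m^{(k)}_i$ is also uniformly bounded, so after yet another extraction these coordinates may be assumed constant.

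The main obstacle is then the behaviour along the directions $J = \{i : v_i = 0\}$, where $m^{(k)}_j$ may diverge even as $v^{(k)}_j m^{(k)}_j$ stays controlled. Here my plan is to exploit further minimality of $w^{(k)}$ in \eqref{E:ct} against test weights such as $w^{(k)} + e_j$ for $j \in J$, together with similar small perturbations, in order to place sharp Diophantine constraints on which lattice points can realize the minimum on $\Gamma^+(f_k)$; the integrality of $w^{(k)}$ and the rationality of each $c_k$ should force the surviving combinatorial data to stabilize along a subsequence. A diagonal extraction should then produce a rational limit direction and a lattice vertex realizing a genuine element $c^\ast \in \canset_{n,\text{smooth,w}}$ of bounded-weight type. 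Monotonicity of $(|w|-1)/w(f)$ under enlargement of $\Gamma^+(f)$ gives $c^\ast \geq c$, while comparing $c^\ast$ to the minimum in \eqref{E:ct} for large $k$ forces $c^\ast \leq c_k$ for some $k$, contradicting $c^\ast \geq c > c_k$. The hard part will be carrying out this limit extraction cleanly and, in particular, ruling out the case of an irrational limit direction $v$; the coprimality constraint on the $w^{(k)}$ together with the bounded multiplicity of $f_k$ should be the key tools, though a careful case analysis in low-dimensional zero faces (small $|J|$) may be needed to finish.
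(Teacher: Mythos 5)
Your argument is not complete: the step you yourself flag as ``the main obstacle'' is exactly where the whole difficulty of the theorem sits, and it is left as a plan rather than a proof. Concretely, once you pass to a limit direction $v$ of the normalized weights, you have no control over the vertex coordinates $m^{(k)}_j$ for $j$ in the zero set $J=\{i: v_i=0\}$, and the ``sharp Diophantine constraints'' from testing against $w^{(k)}+e_j$ are never written down; nothing in the proposal shows that the combinatorial data stabilizes, nor is the possibility of an irrational limit direction excluded (you explicitly defer both). Moreover, the intended final contradiction is not correctly set up even granting the stabilization: producing some $c^\ast\geq c$ lying in $\canset_{n,\text{smooth,w}}$ contradicts nothing by itself, and the claimed inequality $c^\ast\leq c_k$ would require comparing the limiting weight/vertex with the actual diagrams $\Gamma^+(f_k)$ --- which is precisely the unproven stabilization. (The preliminary reductions --- boundedness of $\mult(f_k)$, boundedness of $m^{(k)}_i$ for $v_i>0$, and the dichotomy on $|w^{(k)}|$ --- are fine, but they are the easy part.)

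For comparison, the paper avoids the limiting analysis entirely by observing that for thresholds realized by weighted blow ups the value \eqref{E:ct} depends only on the extended Newton diagram, and then arguing purely combinatorially: Lemma~\ref{L:ACC} (a Dickson-type statement, proved via the finiteness properties of the semigroup $\Z_{\geq 0}^n$) says every sequence in $\newton_n$ has a non-increasing subsequence, and Lemma~\ref{L:monoton} says $\ct$ is monotone non-decreasing under inclusion of diagrams; together these immediately forbid a strictly increasing sequence of thresholds. If you want to salvage your route you would essentially have to reprove a generic-limit/stabilization statement of de Fernex--Musta\c{t}\u{a} type, which is substantially harder than the diagram argument; alternatively, note that your sequence of diagrams $\Gamma^+(f_k)$ cannot contain a non-increasing pair (by monotonicity), and prove the Dickson-type lemma instead --- that closes the gap with far less machinery.
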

The \emph{proof} of Theorem~\ref{T:wACC} will follow from the 2 lemmas
below. 

Let us denote by $\newton_n$ the set of all possible extended 
Newton diagrams in $\R^n$. Clearly this set is ordered with respect to 
the inclusion relation $\subseteq$. The next lemma is perhaps well known
for specialists on singularity theory or on Gr\"obner bases. But since we
do not know a good reference, we state it with a proof.
\begin{lemma}\label{L:ACC}
Any infinite sequence of elements of the ordered set $\newton_n$ contains
a monotonous non increasing subsequence. In particular, the set $\newton_n$ 
satisfies ACC.
\end{lemma}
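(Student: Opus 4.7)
By Dickson's lemma applied to $\mathbb{N}^n$, each diagram $\Gamma\in\newton_n$ has only finitely many minimal lattice points, which I call its \emph{vertices} $V(\Gamma)\subset\mathbb{N}^n$. The diagram is recovered as the union of $v+\mathbb{R}^n_{\geq 0}$ over $v\in V(\Gamma)$, and $\Gamma_i\supseteq\Gamma_j$ if and only if every vertex of $\Gamma_j$ coordinatewise dominates some vertex of $\Gamma_i$. Given an infinite sequence $\Gamma_1,\Gamma_2,\ldots$, I apply the infinite Ramsey theorem to the three-coloring of pairs $\{i<j\}$ by the inclusion relation ($\supseteq$, $\subsetneq$, or incomparable). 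A $\supseteq$-monochromatic subsequence is exactly the non-increasing one we want; a $\subsetneq$-monochromatic subsequence would give an infinite strictly ascending chain of monomial ideals in the Noetherian ring $\C[x_1,\ldots,x_n]$, which is impossible. It therefore suffices to rule out infinite antichains of diagrams.

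The main tool for ruling out an infinite antichain is Dickson's lemma applied in a higher-dimensional lattice. If along an infinite subsequence the vertex counts $|V(\Gamma_k)|$ are bounded, equal to $r$ (by pigeonhole on $\mathbb{N}$), I sort the vertices of each $\Gamma_k$ lexicographically into an $r$-tuple, viewed as a point of $(\mathbb{N}^n)^r\cong\mathbb{N}^{nr}$. The well-quasi-ordering of $\mathbb{N}^{nr}$ (another instance of Dickson) produces an infinite componentwise non-decreasing subsequence of these tuples, which, since the vertex ordering is fixed lexicographically, translates directly into a non-increasing subsequence of $\Gamma_k$'s, contradicting the antichain hypothesis.

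The main obstacle is the case where $|V(\Gamma_k)|$ grows unboundedly along every subsequence. To handle this, I invoke the Noetherian property of $\C[x_1,\ldots,x_n]$ a second time: the chain $I_{\Gamma_1}\subseteq I_{\Gamma_1}+I_{\Gamma_2}\subseteq\cdots$ of monomial ideals stabilizes, so all $\Gamma_k$ sit inside a finitely generated diagram $\Gamma^\ast=\Gamma_1\cup\cdots\cup\Gamma_N$ with a finite vertex set $G$, and every vertex of every $\Gamma_k$ dominates some $g\in G$. A pigeonhole on the ``type pattern'' of $V(\Gamma_k)$ relative to $G$ (counting how many vertices of $\Gamma_k$ sit above each $g\in G$), iterated together with Dickson, reduces the unbounded case to the bounded-vertex-count case treated above. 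This last pigeonhole-and-iteration step is the technical heart of the proof.
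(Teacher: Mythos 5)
Your Ramsey reduction and your treatment of the bounded case are sound: if along a subsequence every diagram has exactly $r$ minimal lattice points, listing them as a point of $\Z_{\geq 0}^{nr}$ and applying Dickson's lemma produces indices $k<l$ for which the $j$-th vertex of $\Gamma_l$ dominates the $j$-th vertex of $\Gamma_k$, whence $\Gamma_k\supseteq\Gamma_l$. (A small inaccuracy: an extended Newton diagram is the \emph{convex hull} of the shifted octants $v+\R_{\geq 0}^{n}$, not their union; this is harmless, since the inclusion criterion through vertices that you rely on remains valid.)

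The gap is the unbounded case, which is exactly where the content of the lemma lies, and your proposal does not prove it --- you yourself defer ``the technical heart'' to an unexplained pigeonhole-and-iteration. Concretely: stabilization of the chain $I_{\Gamma_1}\subseteq I_{\Gamma_1}+I_{\Gamma_2}\subseteq\dots$ only yields a common upper bound $\Gamma_k\subseteq\Gamma^{*}$, which puts no restriction on how the complements of the $\Gamma_k$ grow; and recording, for each $g\in G$, how many vertices of $\Gamma_k$ lie above $g$ gives no control over the relation you actually need (whether every vertex of $\Gamma_l$ dominates \emph{some} vertex of $\Gamma_k$): two diagrams can have identical count patterns relative to $G$ yet be incomparable, the counts are unbounded so no pigeonhole stabilizes them, and no quantity is exhibited that decreases under the proposed iteration, so there is nothing to induct on. The missing step is not a formality: for a general well-quasi-ordered semigroup the analogous statement is false (Rado's example is a well-quasi-order carrying an infinite antichain of finitely generated upward-closed subsets), so any correct argument must use the coordinate structure of $\Z_{\geq 0}^{n}$ beyond ``finitely many vertices plus Dickson''. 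This is precisely what the paper's proof supplies: it inducts on $n$, projecting the ideals to the coordinate axes; when some projection is unbounded it combines the inductive non-increasing subsequence of the projections to the complementary hyperplane with the observation that an ideal lying above the maximal last coordinate of the vertices of $\Gamma_1^{+}$ is then contained in $\Gamma_1^{+}$, and when all projections are bounded it extracts a common vertex $m$ and applies the induction hypothesis on the complement of $m+\R_{\geq 0}^{n}$, a finite union of lower-dimensional shifted subsemigroups (the second finiteness property). Some ingredient of this kind --- a dimension induction, or Higman's lemma applied after slicing along a coordinate --- is what your unbounded case needs and currently lacks.
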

\begin{proof} It is more convinient to use the ideals of semigroup
$(\Z_{\geq 0}^{n},+)$ (see \cite{Kh&Ch}, \S\,1.1) in the proof instead 
of extended Newton diagrams. With any diagram $\Gamma^+$ we can associate
the ideal generated by its vertices. From any ideal we can construct an
extended Newton diagram by taking its convex hull in $\R_{\geq 0}^{n}$. 
Despite this correspondence is not one-to-one, it should be clear that
it suffices to prove the lemma for ideals. 

The proof goes by induction on the dimension $n$. For $n=1$ the statement
is obvious. Let $\Gamma_{1}^{+}$, $\Gamma_{2}^{+},\dots$ be a sequence
of ideals of the semigroup $\Z_{\geq 0}^{n}$. We can project
every ideal $\Gamma_{i}^{+}$ to the coordinate axes of the space $\R^n$. 
Every projection gives an ideal in the semigroup $\Z_{\geq 0}$.
First let us suppose that for some axis, say, the last one, the so obtained
sequence of projections is unbounded, i.~e., for any $M>0$ there is a
number $k$ such that the projection of the ideal $\Gamma_{k}^{+}$ to the
last axis lies above $M$. Choosing a subsequence we can assume that
in fact the projection of any ideal $\Gamma_{i}^{+}$ lies above $M$ for
$i\geqslant k$. The projection of every ideal $\Gamma_{i}^{+}$ to the
coordinate hyperplane containing the first $n-1$ coordinate axes
is an ideal in the semigroup $\Z_{\geq 0}^{n-1}$. By the induction
hypothesis we can choose a subsequence such that the corresponding sequence
of projections to the given hyperplane is monotonous non increasing.
To simpify the notation we assume that already the sequence $\Gamma_{1}^{+}$, 
$\Gamma_{2}^{+},\dots$ has this property. Consider the ideal 
$\Gamma_{1}^{+}$. Let us denote by $M$ the maximal of the last coordinates 
of its vertices (we use here the fact that any ideal of 
$\Z_{\geq 0}^{n}$ is a \emph{finite} union of the shifted coordinate
octants; this is called the \emph{first finiteness property} of the 
semigroup $\Z_{\geq 0}^{n}$, see \cite{Kh&Ch}, \S\,1.1, Theorem~1). 
Starting from some number $k$ all the ideals $\Gamma_{i}^{+}$ lie above
$M$ with respect to the last coordinate, and thus they all are contained
in $\Gamma_{1}^{+}$. Going on in the same way we construct the needed
non increasing subsequence of ideals.

Now suppose that the projections of the sequence $\Gamma_{1}^{+}$, 
$\Gamma_{2}^{+},\dots$ to all the coordinate axes are bounded. This
means that all the ideals $\Gamma_{i}^{+}$ have a vertex inside some
fixed polytope. But every bounded polytope has only finite number of
integer points, thus, again choosing a subsequence if necessary,
we can assume that all $\Gamma_{i}^{+}$ have a common vertex $m$. It
follows that all the ideals $\Gamma_{i}^{+}$ contain also a common
shifted coordinate octant $m+\R_{\geq 0}^{n}$. Consider the complement
to this octant in $\Z_{\geq 0}^{n}$. It is clear that it is a union
of finite number of shifted coordinate subsemigroups of the semigroup
$\Z_{\geq 0}^{n}$ (this is a particular case of the \emph{second
finiteness property} of the semigroup $\Z_{\geq 0}^{n}$, 
\cite{Kh&Ch}, \S\,1.1, Theorem~2). The dimension of such a semigroup
is not greater than $n-1$, and the intersection of any ideal
$\Gamma_{i}^{+}$ with such a semigroup is an ideal of it. Applying the
induction hypothesis and repeatedly (but finite number of times!) choosing 
a subsequence, we conclude that outside the octant $m+\R_{\geq 0}^{n}$ 
our sequence $\Gamma_{1}^{+}$, $\Gamma_{2}^{+},\dots$ can be assumed to be
non increasing. But taking the union with the same octant obviously does not
change this property. This finishes the proof.
\end{proof}

The formula \eqref{E:ct} formally defines the canonical threshold of
an extended Newton diagram $\Gamma^+$ which we shall denote by 
$\ct(\Gamma^+)$. We can consider it as a map
$$\ct\colon \newton_n\to \R\,.$$
\begin{lemma}\label{L:monoton}
The map $\ct$ is monotonous, i.~e.,
$$\Gamma_{1}^{+}\subseteq\Gamma_{2}^{+} \Rightarrow 
\ct(\Gamma_{1}^{+})\leq\ct(\Gamma_{2}^{+})\,.$$
\end{lemma}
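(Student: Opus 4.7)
The plan is to reduce the claim to the observation that the numerator in \eqref{E:ct} depends only on $w$, while the denominator $w(f)$ is monotonously \emph{decreasing} in the Newton diagram, so each individual fraction in the definition of $\ct$ is monotonously increasing in $\Gamma^+$; the minimum over $w$ then inherits the inequality.

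First I would reinterpret the denominator intrinsically in terms of the diagram. Since the weight vector $w$ lies in the closed positive octant $\tau$, the linear functional $m \mapsto w \cdot m$ attains its minimum over the Newton polyhedron $\Gamma^+$ at a vertex of $\Gamma^+$, and the vertices correspond to monomials actually appearing in $f$. Hence
$$w(f) \;=\; \min_{m \in \Gamma^+(f)} w \cdot m \;=:\; w(\Gamma^+),$$
so that $w(f)$ is indeed a function of $\Gamma^+$ alone, as already remarked in the text preceding the lemma.

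Next I would observe the key monotonicity of the denominator: if $\Gamma_1^+ \subseteq \Gamma_2^+$, then the minimum of $w \cdot m$ over $\Gamma_1^+$ is taken over a smaller set, so
$$w(\Gamma_1^+) \;\geq\; w(\Gamma_2^+)$$
for every $w \in \tau$. The numerator $w_1 + \dots + w_n - 1$ is independent of the Newton diagram; moreover for every admissible $w$ (integer vector in $\tau$ with $w \ne 0, e_i$) we have $w_1 + \dots + w_n \geq 2$, so the numerator is strictly positive. Dividing by the (nonnegative) denominators therefore gives
$$\frac{w_1 + \dots + w_n - 1}{w(\Gamma_1^+)} \;\leq\; \frac{w_1 + \dots + w_n - 1}{w(\Gamma_2^+)}$$
for every admissible $w$, with the convention $1/0 = +\infty$ in force on both sides: if $w(\Gamma_2^+) = 0$ the right hand side is $+\infty$ and nothing is to be checked, while if $w(\Gamma_2^+) > 0$ then also $w(\Gamma_1^+) > 0$ and the inequality is just the usual reversal of fractions.

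Finally, taking the infimum over $w$ on both sides yields $\ct(\Gamma_1^+) \leq \ct(\Gamma_2^+)$. There is essentially no obstacle here; the only point requiring a small amount of care is handling the $w$ for which $w(\Gamma^+) = 0$ (boundary weights such that the diagram lies in a hyperplane orthogonal to $w$), and this is taken care of cleanly by the $+\infty$ convention announced in the paragraph following \eqref{E:ct}.
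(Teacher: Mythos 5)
Your proof is correct and follows essentially the same route as the paper: both arguments rest on the fact that for each fixed admissible weight $w$ the value $\min_{m\in\Gamma^+} w\cdot m$ can only increase when the diagram shrinks, so the fraction in \eqref{E:ct} can only decrease, and the threshold inherits the inequality. The only cosmetic difference is that you run this comparison uniformly over all $w$ and then take infima (also handling the $w(f)=0$ convention explicitly), whereas the paper applies it just to the weight realizing $\ct(\Gamma_2^+)$; this is the same idea.
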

\begin{proof}
Let $w$ be a vector in $\Z_{\geq 0}^{n}$, $w\ne 0,e_1,\dots,e_n$. It defines 
a rational function $w\colon\R^{n} -\to \R$
$$w(x)=\frac{w_1+\dots+w_n-1}{w_1 x_1+\dots+w_n x_n}\,.$$
The level set $w=c$ of this function ($w$ is fixed, $x$ varies) is a
hyperplane with a non negative normal vector $w$. The smaller $c>0$ we take,
the further from the origin the level set $w=c$ is.

Suppose that the canonical threshold $c=\ct(\Gamma_{2}^{+})$ of the diagram 
$\Gamma_{2}^{+}$ is realized by the weight vector $w$ and this threshold is 
attained on a vertex $m$ of the diagram $\Gamma_{2}^{+}$, $c=w(m)$. The 
minimum $c'$ of the function $w$ on the diagram $\Gamma_{1}^{+}$ is not 
greater than $c$ because $\Gamma_{1}^{+}$ is situated ``above'' the diagram 
$\Gamma_{2}^{+}$. The threshold $\ct(\Gamma_{1}^{+})$ can only be less or
equal to $c'$.
\end{proof}

To finish the proof of Theorem~\ref{T:wACC}, suppose that there exists a 
strictly increasing sequence $c_1<c_2<\dots$ of canonical thresholds from
$\canset_{n,\text{smooth,w}}$. Let $\Gamma_{1}^{+}$, $\Gamma_{2}^{+},\dots$
be a sequence of extended Newton diagrams such that $\ct(\Gamma_{k}^{+})=
c_k$. We can not have an inclusion $\Gamma_{i}^{+}\supseteq\Gamma_{j}^{+}$
for any $i<j$ because of Lemma~\ref{L:monoton}. But this contradicts 
Lemma~\ref{L:ACC}.

\section{Canonical threshold for Brieskorn singularities in $\C^3$}
\label{S:Brieskorn}
A \emph{Brieskorn singularity} is a hypersurface singularity $S$ is $\C^n$
given by the equation
$$x_{1}^{a_1}+x_{2}^{a_2}+\dots+x_{n}^{a_n}=0\,.$$
For $n=3$ we shall assume that $S$ is given by
\begin{equation}\label{E:Brieskorn}
x^a+y^b+z^c=0\,,
\end{equation}
where $2\leq a\leq b\leq c$. The log canonical threshold of the pair
$(\C^n,S)$ can be determined by the formula (\cite{Singofpairs}, 8.15)
$$\lct_0(\C^n,S)=\min\{\frac{1}{a_1}+\frac{1}{a_2}+\dots+
\frac{1}{a_n},1\}\,.$$
In this section we calculate the 3-dimensional canonical threshold 
$\ct_0(\C^3,S)$. Brieskorn singularities are nondegenerate with respect
to their Newton diagrams, thus they admit embedded toric resolutions
(for the definition of nondegeneracy and construction of embedded toric
resolution see \cite{Varchenko}). It follows, in particular, that their
canonical thresholds are realized by weighted blow ups and we can apply
formula~\eqref{E:ct} from subsection~\ref{SS:wblowup}. In the case of
3-dimensional Brieskorn singularities it takes the form
\begin{equation}\label{E:ctBrieskorn}
\ct_0(\C^3,S)=
\min_{w\ne e_1,e_2,e_3,0} \frac{w_1+w_2+w_3-1}{\min\{aw_1,bw_2,cw_3\}}\,,
\end{equation}
where the minimum is taken over all vectors $w$ from $\Z_{\geq 0}^{3}$.
\begin{remark}\label{R:Kawakita}
Formula~\eqref{E:ctBrieskorn} is not a direct consequence of 
Theorem~\ref{T:Kawakita} and subsection~\ref{SS:reduction}. Indeed, the 
theorem states that there exist \emph{some} coordinates in which the 
extremal contraction realizing the canonical threshold is a weighted 
blow up. But in those coordinates the equation of our singularity must not 
be of Brieskorn type or even nondegenerate.
\end{remark}
\begin{lemma} \label{L:order}
Let $S\subset\C^3$ be a Brieskorn singularity given by 
equation~\eqref{E:Brieskorn}. Suppose that a weight vector $w=(p,q,r)$ 
realizes the minimum in \eqref{E:ctBrieskorn}. Then $p\geq q\geq r$.
\end{lemma}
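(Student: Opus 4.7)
The plan is to prove $p\ge q$ and $q\ge r$ separately, each by an integer-step exchange argument: assuming a minimizer violates one of these inequalities, I will subtract $1$ from the ``offending'' coordinate to produce an admissible weight vector with the same denominator in \eqref{E:ctBrieskorn} but a strictly smaller numerator, contradicting minimality.

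For the inequality $p\ge q$, I would first observe that any minimizer has $p,q,r\ge 1$ (otherwise one of $ap,bq,cr$ vanishes and the ratio is $+\infty$). Suppose $p<q$. From $a\le b$ and $p<q$ one gets $bq\ge aq>ap$, so the denominator $\min\{ap,bq,cr\}$ equals $\min\{ap,cr\}$. Since $p,q\in\Z$ and $p<q$, we have $q-1\ge p$, and combined with $b\ge a$ this gives $b(q-1)\ge bp\ge ap$. Hence the admissible vector $(p,q-1,r)$ (its second coordinate is still $\ge 1$) has denominator $\min\{ap,b(q-1),cr\}=\min\{ap,cr\}$, identical to the old one, while its numerator drops by exactly $1$; the ratio strictly decreases, contradicting minimality.

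The inequality $q\ge r$ follows by the completely analogous argument, using $b\le c$ instead of $a\le b$: if $q<r$, then $cr\ge br>bq$, so the denominator equals $\min\{ap,bq\}$, and passing to the admissible vector $(p,q,r-1)$ one has $c(r-1)\ge cq\ge bq$, so the denominator is unchanged while the numerator drops by $1$. The only subtle point of the whole argument is that the unit integer step $q\to q-1$ (resp.\ $r\to r-1$) is exactly what the integrality of the weights allows, and it is what keeps the denominator untouched -- this is the crux that would fail for a continuous decrement and is the main (though minor) thing to verify.
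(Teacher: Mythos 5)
Your proof is correct and is essentially the paper's own argument: assume $p<q$ (resp.\ $q<r$), note the denominator reduces to $\min\{ap,cr\}$ (resp.\ $\min\{ap,bq\}$), and pass to $(p,q-1,r)$ (resp.\ $(p,q,r-1)$) to strictly decrease the ratio, contradicting minimality. You simply spell out the detail the paper leaves implicit, namely that integrality gives $b(q-1)\geq bp\geq ap$ (resp.\ $c(r-1)\geq cq\geq bq$), so the denominator is unchanged.
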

\begin{proof}
It is clear that the vector realizing the canonical threshold satisfies
$p,q,r\ne 0$. Assume, for example, that $p<q$. Then
$$\ct_0(\C^3,S)=\frac{p+q+r-1}{\min\{ap,cr\}}$$
and $q\geq 2$. But in this case we could take $w'=(p,q-1,r)$ 
instead of $w$ and $w'$ would give strictly smaller canonical threshold, 
a contradiction. Other inequalities can be considered similarly.
\end{proof}

\begin{lemma}\label{L:r1}
A weight vector $w$ giving the canonical threshold of a Brieskorn 
singularity~\eqref{E:Brieskorn} can always be chosen in the from $w=(p,q,1)$, 
where $p$ and $q$ are coprime positive integers.
\end{lemma}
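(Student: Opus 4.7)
The plan is to show that any minimizer of~\eqref{E:ctBrieskorn} is forced into a canonical form and can then be replaced by an equivalent weight with $r = 1$ and coprime first two coordinates.

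\textbf{Canonical form.} Take a minimizer $w = (p, q, r)$ with $p \geq q \geq r$ (by Lemma~\ref{L:order}), and put $m = \min\{ap, bq, cr\}$. Local optimality at $(p, q, r)$: the ratio at $(p - 1, q, r)$ cannot be strictly smaller, so if $a(p - 1) \geq m$ then the new denominator is still $\min\{ap, bq, cr\} = m$ while the numerator drops by one, contradicting minimality. Hence $a(p - 1) < m$, which combined with $ap \geq m$ forces $p = \lceil m/a \rceil$. Symmetric arguments in the other two coordinates give $q = \lceil m/b \rceil$ and $r = \lceil m/c \rceil$. Thus $\ct_0(\C^3, S) = \min_{m \geq 1} G(m)$, where
$$G(m) = \frac{\lceil m/a \rceil + \lceil m/b \rceil + \lceil m/c \rceil - 1}{m},$$
and the condition $r = 1$ translates to $m \leq c$.

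\textbf{Scaling to $r = 1$.} Suppose the canonical minimizer has $r \geq 2$. Put $p_0 = \lfloor p/r \rfloor$, $q_0 = \lfloor q/r \rfloor$; then $p_0, q_0 \geq 1$ since $p, q \geq r$. Write $p = rp_0 + p_1$, $q = rq_0 + q_1$ with $0 \leq p_1, q_1 \leq r - 1$, and set $m' = \min\{ap_0, bq_0, c\}$. The claim that $(p_0, q_0, 1)$ has ratio no greater than $(p, q, r)$ is equivalent to
$$(p_0 + q_0)(m - r m') \leq m'(p_1 + q_1 + r - 1).$$
Since $ap = arp_0 + ap_1$, $bq = brq_0 + bq_1$, and $cr = cr$, one has $m \geq rm' = \min\{arp_0, brq_0, cr\}$. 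I will verify the inequality by case analysis on which of $\{ap_0, bq_0, c\}$ attains $m'$ and which of $\{ap, bq, cr\}$ attains $m$: the canonical offsets $ap - m, bq - m, cr - m$ lying in $[0, a-1], [0, b-1], [0, c-1]$ respectively give the tight bound on $m - rm'$ required. In each case, the inequality reduces to the elementary estimate $q_0 p_1 \leq p_0(q_1 + r - 1)$ (or its analogue with $p, q$ swapped), which follows from $p_0 \geq q_0$ (a consequence of $p \geq q$) together with $p_1, q_1 \leq r - 1$. So $(p_0, q_0, 1)$ is also a minimizer, and we may assume $r = 1$.

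\textbf{Coprimality.} If the $r = 1$ minimizer $(p_0, q_0, 1)$ satisfies $\gcd(p_0, q_0) = d > 1$, scaling to $(p_0/d, q_0/d, 1)$ gives
$$\frac{p_0/d + q_0/d}{\min\{ap_0/d, bq_0/d, c\}} = \frac{p_0 + q_0}{\min\{ap_0, bq_0, dc\}} \leq \frac{p_0 + q_0}{\min\{ap_0, bq_0, c\}},$$
so $(p_0/d, q_0/d, 1)$ is still a minimizer, now with coprime first two coordinates.

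The main obstacle lies in the case analysis of the scaling step, specifically the ``cross'' cases where $m$ is attained at one entry of $\{ap, bq, cr\}$ while $rm'$ is attained at a different entry of $\{arp_0, brq_0, cr\}$. The naive bound $m - rm' \leq \max(ap_1, bq_1)$ is sometimes too crude, but the canonical offsets constrain the difference $m - rm'$ finely enough to recover the desired estimate.
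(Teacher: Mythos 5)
Your reduction to canonical form ($p=\lceil m/a\rceil$, $q=\lceil m/b\rceil$, $r=\lceil m/c\rceil$ for $m=\min\{ap,bq,cr\}$) is fine, and so is the final coprimality step, but the scaling step has a genuine gap, and it sits exactly where the difficulty of the lemma lies. In your ``cross'' case $m'=bq_0\le\min\{ap_0,c\}$ the required inequality reduces, as you say, to $p_0q_1\le q_0(p_1+r-1)$; but this is \emph{not} implied by $p_0\ge q_0$ and $p_1,q_1\le r-1$ --- those hypotheses push in the opposite direction (take $q_1=r-1$, $p_1=0$, $p_0>q_0$). Nor do the canonical offsets rescue it: canonical form allows $m=bq$ exactly (whenever $bq\le\min\{ap,cr\}$), which is precisely the worst case for the bound $m-rm'\le bq_1$. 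Concretely, for $(a,b,c)=(4,13,20)$ the weight $(p,q,r)=(10,3,2)$ is in canonical form for $m=39$ and satisfies all your structural hypotheses ($p_1=0$, $q_1=1=r-1$, $m'=bq_0=13$, $p_0=5>q_0=1$), yet $h(10,3,2)=14/39<6/13=h(5,1,1)$: the floor-scaled weight is strictly worse. So the inequality you want cannot be deduced from canonical form and $p\ge q\ge r$ alone; any correct argument must invoke global minimality of $(p,q,r)$ against \emph{other} weights with third coordinate $1$ (here $(3,1,1)$ beats $(10,3,2)$, which is why the example does not contradict the lemma, only your derivation), and you never use that information.

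This is exactly what the paper's proof does instead: it shows that the slice $w_3=1$ of the tetrahedron spanned by $(1,0,0)$, $(0,1,0)$, $(0,0,1)$, $(p,q,r)$ --- a triangle with vertices $O$, $\frac1r(p+r-1,q)$, $\frac1r(p,q+r-1)$ --- contains \emph{some} positive integer point, via a unimodular shear when $q\le 2r-1$, a segment-length argument when $q\ge 2r$, $r\ge 3$, and a separate analysis of $r=2$. Your two inequalities $q_0p_1\le p_0(q_1+r-1)$ and $p_0q_1\le q_0(p_1+r-1)$ are precisely the conditions for the floor point $(\lfloor p/r\rfloor,\lfloor q/r\rfloor)$ to lie in that triangle, and the second one genuinely fails in general; correspondingly, in the paper's hard subcase ($r=2$, $p\ge 2q$) the integer point produced is not the floor point. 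As written, the floor-scaling route cannot be completed, so the proof is incomplete at its central step.
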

\begin{proof}
Consider a piecewise rational function $h$ determined on $\R_{>0}^{3}$ by
the formula
$$h(w)=\frac{w_1+w_2+w_3-1}{\min\{aw_1,bw_2,cw_3\}}\,.$$
Its level set $h(w)=s$, $s\geq 0$, coincides with the lateral surface of
a tetrahedron $\Delta_s$ with vertices $(1,0,0)$, $(0,1,0)$, $(0,0,1)$ 
(forming the base face of the tetrahedron) and with the last vertex 
$$\frac{1}{1/a+1/b+1/c-s}
\left(\frac{1}{a},\frac{1}{b},\frac{1}{c}\right)$$
on the line $aw_1=bw_2=cw_3$. If $s<s'$, then $\Delta_s\subset\Delta_{s'}$.
We see that the canonical threshold of a Brieskorn 
singularity~\eqref{E:Brieskorn} can be found with a help of the following
process. For every $s\geq0$ we construct the tetrahedron $\Delta_s$ and find
the minimal $s_0$ for which $\Delta_{s_0}$ contains an integer point 
$(p,q,r)$ with $p,q,r>0$ on its lateral border. Then $\ct_0(\C^3,S)=s_0$ 
and the threshold is realized by the weight vector $(p,q,r)$.

Now suppose that $\ct_0(\C^3,S)=s_0=h(p,q,r)$ and $r\geq 2$. From 
Lemma~\ref{L:order} we know that $p\geq q\geq r$. Consider also a
tetrahedron $\Delta_w$ with vertices $(1,0,0)$, $(0,1,0)$, $(0,0,1)$, and
$(p,q,r)$. Obviously $\Delta_w\subset\Delta_{s_0}$ and we prove our lemma
if we show that the intersection of $\Delta_w$ with the plane $w_3=1$
contains a positive integer point. Indeed, if $(p',q',1)$ is such a point
and $d$ is the greatest common divisor of $p'$ and $q'$, the point
$(p'/d,q'/d,1)$ also lies in $\Delta_w$ and gives smaller value of $h$.
The intersection $\Delta_w\cap\{w_3=1\}$ is a triangle with
vertices $(0,0)$, $1/r(p+r-1,q)$, $1/r(p,q+r-1)$ (the third coordinate
$w_3=1$ is omitted here). It is a bit more convenient to multiply
everything by $r$ and to show that the triangle $OPQ$, $O=(0,0)$, 
$P=(p,q+r-1)$, $Q=(p+r-1,q)$, contains a positive integer point with
coordinates $0\mod r$ (see Figure~\ref{F:intpoints}).
\begin{figure}[hbt]
\begin{picture}(7,6)
\put(1,1){\vector(1,0){5.5}} \put(6.5,0.7){$w_1$}
\put(1,1){\vector(0,1){4.5}} \put(0.5,5.5){$w_2$}
\put(0.7,0.7){$O$}
\put(1,1){\line(5,2){5}} \put(6,3){\line(-1,1){1}} \put(1,1){\line(4,3){4}}
\put(6,2.7){$Q(p+r-1,q)$} \put(4.3,4.2){$P(p,q+r-1)$}
\put(5.5,3.5){\line(-1,0){1.165}} \put(4.75,2.5){\line(-1,0){1.74}}
\put(6,3){\line(-1,0){2.33}}
\put(2.6,2.45){$A$} \put(3.3,3){$E$} \put(4,3.5){$B$} 
\put(5.6,3.5){$C$} \put(4.7,2.2){$D$}
\end{picture}
\caption{Integer points in the triangle}\label{F:intpoints}
\end{figure}
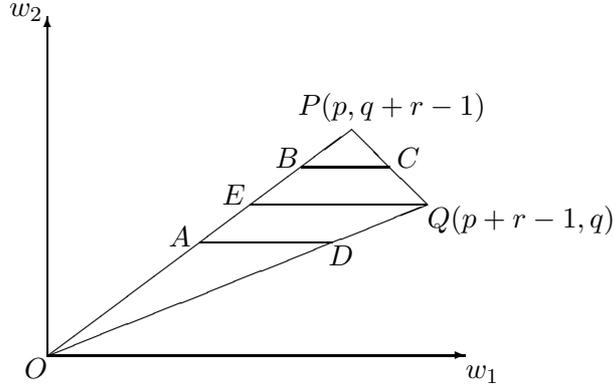
Other points in Figure~\ref{F:intpoints} have the following meaning.
$E$ is the intersection point of the lines $OP$ and $w_2=q$. Its coordinates
are $(pq/(q+r-1),q)$. The segment $BC$ is the middle line of the triangle 
$EPQ$ and the segment $AD$ lies on the line $w_2=q-r/2$. 

Note that the point $Q$ lies under the diagonal $w_1=w_2$. Thus if the 
point $P$ lies above the diagonal, then the triangle $OPQ$ contains already 
the point $(r,r)$. So let us assume $p>q+r-1$. Choose an integer $k\geq 2$ 
such that $(k-1)(q+r-1)< p\leq k(q+r-1)$. Next we consider 2 cases:
$q\leq 2r-1$ and $q\geq 2r$. Suppose first that $q\leq 2r-1$. Consider a
transformed triangle $OP'Q'$ obtained from $OPQ$ with a help of the
unimodular transformation
$$
\begin{pmatrix}
1 & -(k-1) \\
0 & 1
\end{pmatrix}\,.
$$
The point $P'(p-(k-1)(q+r-1),q+r-1)$ lies above the diagonal. Indeed,
$p-(k-1)(q+r-1)\leq q+r-1$. On the other hand, the point 
$Q'(p-(k-1)q+r-1,q)$ lies under the diagonal: 
$$p-(k-1)q+r-1>(k-1)(q+r-1)-(k-1)q+r-1=$$
$$=2r-1+(k-2)r-(k-1)\geq q+(k-2)r-(k-1)\geq q-1$$
because $r$, $k\geq 2$. It follows that the triangle $OP'Q'$ contains the 
point $(r,r)$. But then the triangle $OPQ$ also has a positive integer
point $0\mod r$.

Now suppose that $q\geq 2r$. Note that the length of $EQ$ is 
$$p+r-1-\frac{pq}{q+r-1}=(r-1)\frac{p+q+r-1}{q+r-1} > 2(r-1)$$
and hence $BC>r-1$. $AD\geq (3/4)EQ > (3/2)(r-1)$. The last number is
greater than $r$ for $r\geq 3$. If we have a segment $[x,y]$ with integer 
$x$ or $y$ on the real line $\R$, and if this segment has length 
$\geq r-1$, then it necessarily contains a point $0\mod r$ (perhaps as one 
of its border points). If the length of the segment is $\geq r$, it
always contains a point $0\mod r$ no matter whether $x$ or $y$ are 
integers. From this observations it easily follows that for $r\geq 3$ 
already the pentagon $ABCQD$ contains a point $0\mod r$. We leave the 
details to the reader. 

So it remains to consider the case when $r=2$ and $q\geq 4$. 
Moreover, we can assume that $p$ and $q$ are odd because otherwise already
one of the points $P$, $Q$, or $(p,q)$ will have coordinates $0$ modulo
$r$. Let us check if the point $(p-1,q-1)$ lies in the triangle
$OPQ$. This holds if
$$\frac{q-1}{p-1}\geq\frac{q}{p+1}\,,$$
or $p\leq 2q-1$. On the other hand,
$$AD=\frac{q-1}{q}\cdot\frac{p+q+1}{q+1}\geq 2$$
holds if 
$$p\geq \frac{(q+1)^2}{q-1}>q+1\,.$$
Two inequalities $p\leq 2q-1$ and $p>q+1$ cover all possibilities for
$p$ and $q$. Thus the triangle $OPQ$ always contains the desired point.
\end{proof}
\begin{remark}
Again Lemma~\ref{L:r1} is not a direct consequence of Kawakita's 
Theorem~\ref{T:Kawakita}, see Remark~\ref{R:Kawakita}
\end{remark}

Now we are ready to deduce a formula for the canonical threshold of a
Brieskorn singularity. To do this, let us introduce some new notation.
Denote by $L$ the point $(c/a,c/b,1)$ of the intersection of the line 
$aw_1=bw_2=cw_3$ with the plane $w_3=1$. Fix a real number $s$ and consider 
the intersection of the tetrahedron $\Delta_s$ with the plane $w_3=1$
(see the proof of Lemma~\ref{L:r1}). For $s<1/a+1/b$ this intersection is 
empty; for $s=1/a+1/b$ it is the segment $OL$; for $s>1/a+1/b$ it is a 
triangle $OMN$ where $M=(c/a,sc-c/a,1)$, $N=(sc-c/b,c/b,1)$ (see 
Figure~\ref{F:intpoints2}). 
\begin{figure}[hbt]
\begin{picture}(6,5)
\put(0.5,0.5){\vector(1,0){5}} \put(5.5,0.2){$w_1$}
\put(0.5,0.5){\vector(0,1){4}} \put(0,4.5){$w_2$}
\put(0.2,0.2){$O$}
\put(0.5,0.5){\line(2,1){4}} \put(4.5,2.5){\line(-1,1){1}} 
\put(0.5,0.5){\line(1,1){3}}
\put(0.5,0.5){\line(3,2){3}} \put(3.5,2.5){\line(1,0){1}}
\put(3.5,2.5){\line(0,1){1}}
\put(3.5,2.5){\circle*{0.1}}
\put(3.1,2.5){$L$} \put(3.4,3.6){$M$} \put(4.6,2.3){$N$}
\put(0.5,1.5){\line(1,0){4.5}} \put(0.2,1.4){$k$} 
\put(2.2,1.5){\circle*{0.1}}
\end{picture}
\caption{Minimizing the function $h$}\label{F:intpoints2}
\end{figure}
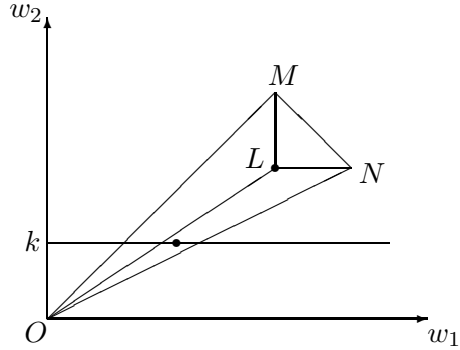
This almost immediately implies
\begin{lemma}
Let $S$ be a Brieskorn singularity of the form \eqref{E:Brieskorn}.
Then $\ct_0(\C^3,S)\geq 1/a+1/b$. Moreover, if $c\geq \lcm(a,b)$, 
where $\lcm$ is the least common multiple, then $\ct_0(\C^3,S)=1/a+1/b$.
\end{lemma}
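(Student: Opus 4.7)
The approach is to combine Lemma~\ref{L:r1} with formula~\eqref{E:ctBrieskorn} and reduce the problem to minimizing $h(p,q,1) = (p+q)/\min\{ap, bq, c\}$ over positive integer pairs $(p,q)$.

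For the lower bound $\ct_0(\C^3, S) \ge 1/a + 1/b$, I would prove the stronger statement that $h(w_1, w_2, 1) \ge 1/a + 1/b$ for all real $w_1, w_2 > 0$. This is a one-line check in each of the three cases for which of $aw_1$, $bw_2$, $c$ realizes the minimum: if the minimum equals $c$ then $w_1 \ge c/a$ and $w_2 \ge c/b$, so $w_1 + w_2 \ge c(a+b)/(ab)$; if the minimum equals $aw_1$ then $bw_2 \ge aw_1$ gives $h = 1/a + w_2/(aw_1) \ge 1/a + 1/b$; the case $\min = bw_2$ is symmetric. Geometrically this is equivalent to the observation (obtained by reading off the third coordinate of the apex of $\Delta_s$) that $\Delta_s$ meets the plane $w_3 = 1$ only once $s \ge 1/a + 1/b$, and at this critical value it touches the plane at the single point $L = (c/a, c/b, 1)$, as suggested by Figure~\ref{F:intpoints2}.

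For the equality part under the extra hypothesis $c \ge \lcm(a, b)$, the plan is to exhibit a concrete integer weight vector realizing the bound. Setting $d = \gcd(a,b)$, I take $(p, q, r) = (b/d, a/d, 1)$; these are positive integers with $\gcd(p,q) = 1$, and one computes $ap = bq = \lcm(a,b) \le c$, so $\min\{ap, bq, c\} = \lcm(a,b)$. A short calculation gives
$$h(b/d, a/d, 1) = \frac{(a+b)/d}{ab/d} = \frac{1}{a} + \frac{1}{b}\,,$$
and by formula~\eqref{E:ctBrieskorn} this yields the matching upper bound. Equivalently, this vector is precisely the first positive lattice point along the edge of $\Delta_{1/a+1/b}$ running from $(0,0,1)$ to $L$.

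The hypothesis $c \ge \lcm(a,b)$ is used exactly at the last step, to guarantee that the constructed vector keeps the minimum on the ``$c$-facet''; without it the point $(b/d, a/d, 1)$ would lie strictly outside $\Delta_{1/a+1/b}$ and the extremal lattice point on the critical segment would have to be sought further from the origin. No substantive obstacle beyond this bookkeeping is expected, since everything reduces to elementary inequalities and a direct exhibition of the lattice point $L$.
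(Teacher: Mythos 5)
Your proposal is correct and follows essentially the same route as the paper: the lower bound is the same observation (the paper phrases it via the tetrahedron $\Delta_s$ not meeting $\{w_3=1\}$ in positive points for $s<1/a+1/b$, you verify it by the equivalent case check on $\min\{aw_1,bw_2,c\}$), and your witness $(b/d,a/d,1)$ is exactly the paper's point $(m/a,m/b,1)$ with $m=\lcm(a,b)$ on the segment $OL$. No further comment is needed.
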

\begin{proof}
Only the ``moreover'' part of the lemma needs a proof. If $c\geq m=
\lcm(a,b)$, then the segment $OL$ contains the point $(m/a,m/b,1)$.
$h(m/a,m/b,1)=1/a+1/b$, but we know from the first part of the lemma
that $1/a+1/b$ is the least possible value of the canonical threshold.
The proof is completed.
\end{proof}

When $c<\lcm(a,b)$, we have a kind of integer programming problem:
minimize the piecewise rational function $h$ on positive integer points
of the plane $w_3=1$. Equivalently, we have to determine the minimal $s$
such that the triangle $OMN$ contains a positive integer point. The point
$w$ on which $h$ attains its minimum is on the following list:
\begin{description}
\item[(i)] $w$ is the nearest to $OL$ integer point lying on the horizontal
line $w_2=k$, $1\leq k\leq \lfloor c/b\rfloor$ to the right from
the segment $OL$ (see Figure~\ref{F:intpoints2}); in this case 
$w=(\lceil kb/a\rceil,k,1)$; here $\lfloor\cdot\rfloor$ denotes the lower 
and $\lceil\cdot\rceil$ the upper integer;
\item[(ii)] $w$ is the nearest to $OL$ integer point lying on the vertical
line $w_1=k$, $1\leq k\leq \lfloor c/a\rfloor$ above the segment 
$OL$; in this case $w=(k,\lceil ka/b\rceil,1)$;
\item[(iii)] $w$ is ``the first'' integer point in the triangle $LMN$;
then $w=(\lceil c/a\rceil,\lceil c/b\rceil,1)$.
\end{description}
Let
$$s_1=\min\limits_{1\leq k\leq \lfloor c/b\rfloor} 
\{h(\lceil kb/a\rceil,k,1)\}=\min\limits_{1\leq k\leq \lfloor c/b\rfloor} 
\left\{\frac{1}{b}+
\frac{1}{kb}\left\lceil\frac{kb}{a}\right\rceil\right\}\,,$$
$$s_2=\min\limits_{1\leq k\leq \lfloor c/a\rfloor} 
\{h(k,\lceil ka/b\rceil,k,1)\}=\min\limits_{1\leq k\leq \lfloor c/a\rfloor} 
\left\{\frac{1}{a}+
\frac{1}{ka}\left\lceil\frac{ka}{b}\right\rceil\right\}\,,$$
$$s_3=h(\lceil c/a\rceil,\lceil c/b\rceil,1)=
\frac{\lceil c/a\rceil+\lceil c/b\rceil}{c}\,.$$
We summarize what we did in this section in the following result.
\begin{theorem}
Let $S$ be a Brieskorn singularity~\eqref{E:Brieskorn}. If 
$\lcm(a,b)\leq c$, then $\ct_0(\C^3,S)=1/a+1/b$; otherwise $\ct_0(\C^3,S)=
\min\{s_1,s_2,s_3,1\}$ (notation as above).
\end{theorem}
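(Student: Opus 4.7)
The plan is to assemble all the pieces developed in this section. By Lemma~\ref{L:r1}, the minimum in formula~\eqref{E:ctBrieskorn} is realized by a weight vector of the form $w = (p, q, 1)$ with $\gcd(p, q) = 1$, and by Lemma~\ref{L:order} we may take $p \geq q$. So the problem reduces to computing
\[
\ct_0(\C^3, S) = \min_{(p,q)\in\Z^2_{>0}} \frac{p+q}{\min\{ap, bq, c\}}.
\]
The preceding lemma already disposes of the case $\lcm(a,b) \leq c$, showing $\ct_0(\C^3,S) = 1/a + 1/b$; so I would focus on $c < \lcm(a,b)$.

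For this case I would use the geometric picture set up in the proof of Lemma~\ref{L:r1}: the sublevel set $\{w : h(w) \leq s\}$ on the plane $w_3 = 1$ is, for $s \geq 1/a + 1/b$, the triangle $OMN$ described before the theorem, which grows monotonically with $s$. The canonical threshold is therefore the smallest $s$ for which $OMN$ contains a positive integer point, and any such extremal point must lie on the boundary $\partial(OMN)$. Crucially, the three edges $OM$, $MN$, $ON$ are precisely the loci where the minimum in $\min\{ap, bq, c\}$ is realized by $ap$, by $c$, and by $bq$, respectively.

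I would next argue that an optimal integer point falls into one of the three categories (i), (ii), (iii) listed before the theorem. Within each horizontal slice $w_2 = k$ with $1 \leq k \leq \lfloor c/b\rfloor$ the function $h(w_1, k, 1)$ is monotone on each of its three linear pieces, so the best candidate in that slice is the integer point nearest to the central segment $OL$, namely $(\lceil kb/a\rceil, k)$; analogously for vertical slices; and the ``corner'' region $\{ap \geq c,\; bq \geq c\}$ contributes only the point $(\lceil c/a\rceil, \lceil c/b\rceil)$. The values of $h$ at these three families of candidates are precisely $s_1$, $s_2$, $s_3$. Intersecting with the trivial bound $\ct_0(\C^3, S) \leq 1$ yields the claimed formula $\min\{s_1, s_2, s_3, 1\}$.

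The main obstacle I expect is clean bookkeeping: one must verify that no other integer point of $OMN$ beats the three families of candidates, and that the slicewise monotonicity really identifies the listed points as the best in their slice. Both steps are elementary but require splitting into the subregions $ap \leq bq \leq c$, $bq \leq ap \leq c$, and $\min\{ap, bq\} \geq c$, and combining this with the symmetry $p \leftrightarrow q$, $a \leftrightarrow b$ to avoid a doubled case analysis.
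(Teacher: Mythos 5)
Your proposal is correct and follows essentially the same route as the paper: reduce via Lemma~\ref{L:r1} to weights $(p,q,1)$, use the triangle $OMN$ (the sublevel set of $h$ in the plane $w_3=1$) to turn the problem into finding the smallest $s$ for which it contains a positive integer point, and read off the three candidate families giving $s_1$, $s_2$, $s_3$. The paper is no more detailed than you are at the final bookkeeping step (it simply lists the candidate points (i)--(iii) and ``summarizes''), so your flagged verification that no other lattice point of $OMN$ beats these candidates is exactly the part the paper also leaves informal.
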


\begin{example}
Consider a Brieskorn singularity
$$x^3+y^7+z^{11}=0\,.$$
Using our formulae we get
$$s_1=\min\limits_{1\leq k\leq 1}\{1/7+(1/7)\cdot 3\}=\frac{4}{7}\,,$$
$$s_2=\min\limits_{1\leq k\leq 3}\{1/3+(1/3k)\lceil(3k)/7\rceil\}=$$
$$=\min\{1/3+1/3,1/3+1/6,1/3+2/9\}=\frac{1}{2}\,,$$
$$s_3=(4+2)/11=\frac{6}{11}\,.$$
It follows that $\ct_0(\C^3,S)=1/2=s_2$ and it is achieved on
the weighted blow up with weights $(2,1,1)$.
\end{example}

\begin{example}
Let $S$ be a Brieskorn singularity
$$x^5+y^6+z^{29}=0\,.$$
We have
$$s_1=\min\limits_{1\leq k\leq 4}\{1/6+1/(6k)\lceil(6k)/5\rceil\}=$$
$$=\min\{1/6+1/3,1/6+1/4,1/6+2/9,1/6+5/24\}=\frac{3}{8}\,,$$
$$s_2=\min\limits_{1\leq k\leq 5}\{1/5+1/(5k)\lceil(5k)/6\rceil\}=
\frac{2}{5}\,,$$
$$s_3=(6+5)/29=\frac{11}{29}\,.$$
It follows that $\ct_0(\C^3,S)=3/8=s_1$ and it is achieved on the weighted
blow up $(5,4,1)$.
\end{example}

\begin{example}
Now let $S$ be a Brieskorn singularity
$$x^{12}+y^{18}+z^{35}=0\,.$$
We get
$$s_1=\min\limits_{1\leq k\leq 1}\{1/18+1/9\}=\frac{1}{6}\,,$$
$$s_2=\min\limits_{1\leq k\leq 2}\{1/12+1/(12k)\lceil(2k)/3\rceil\}=
\frac{1}{6}\,,$$
$$s_3=(3+2)/35=\frac{1}{7}\,.$$
It follows that $\ct_0(\C^3,S)=1/7=s_3$ and it is achieved on the weighted
blow up $(3,2,1)$.
\end{example}

\section{The upper part of the canonical set}\label{S:4/5}
In this section we strengthen Theorem~\ref{T:5/6} of Yu.~G. Prokhorov
describing the upper part of the set $\canset_{3}$ of 3-dimensional 
canonical thresholds.
\begin{theorem}\label{T:4/5}
The intersection $\canset_3\cap[4/5,1]$ is precisely $\{4/5,5/6,1\}$.
\end{theorem}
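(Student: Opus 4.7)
The plan is to deduce Theorem~\ref{T:4/5} by combining Prokhorov's Theorem~\ref{T:5/6} with a sharp lower bound $c \geq 5/6$ for smooth 3-dimensional thresholds $c > 4/5$. Since Theorem~\ref{T:5/6} already gives $4/5, 5/6, 1 \in \canset_3$ and $\canset_3 \cap (5/6, 1) = \emptyset$, it suffices to prove $\canset_3 \cap (4/5, 5/6) = \emptyset$. Suppose for contradiction $c = \ct_P(X,S) \in (4/5, 5/6)$. By Theorem~\ref{T:5/6}, $X$ is smooth. By Lemma~\ref{L:reduction}, $c$ is realized by some extremal divisorial contraction $g'$; since contractions to a curve give only values in $\{1/n : n \geq 1\}$ (disjoint from $(4/5, 5/6)$), $g'$ contracts to a smooth point, and Theorem~\ref{T:Kawakita} gives $c = (a+b)/N$ for some coprime pair $(a,b)$ with $1 \leq a \leq b$, in suitable local coordinates $(x,y,z)$, where $N$ is the $(1,a,b)$-weight of $f$. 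Taking $w = (1,1,1)$ in \eqref{E:ct} yields $c \leq 2/\mult_0(f)$, forcing $\mult_0(f) \leq 2$; ruling out $\mult_0(f) = 1$ (which gives $c = 1$), we have $\mult_0(f) = 2$.

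Since $c < 1$ forces $N > a+b$, inspection of the six possible $(1,a,b)$-weights of quadratic monomials---$2$, $1+a$, $1+b$, $2a$, $a+b$, $2b$---shows that only $z^2$ (of weight $2b$) can appear in $f$, and this requires $a < b$. Thus, after rescaling, $f = z^2 + (\text{terms of degree} \geq 3)$ in these coordinates. The splitting lemma next provides an analytic change of coordinates (still denoted $(x,y,z)$) in which $f = z^2 + G(x,y)$ with $G$ of order at least $3$ at the origin; the canonical threshold is invariant under this change. Applying \eqref{E:ct} with $w = (1,1,2)$ to the new form gives $c \leq 3/4$ whenever $\mult_0(G) \geq 4$, contradicting $c > 4/5$. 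Hence $\mult_0(G) = 3$.

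The decisive claim is now: for $f = z^2 + G(x,y)$ with $\mult_0(G) = 3$, every weighted blow-up with weights $w = (w_1, w_2, w_3) \in \Z_{>0}^{3}$ gives ratio $(w_1+w_2+w_3-1)/w(f) \geq 5/6$. Granting this, since the Kawakita weights $(1,a,b)$ realizing $c$ lie in $\Z_{>0}^3$, we get $c \geq 5/6$, contradicting $c < 5/6$. For the claim, note that $w(f) = \min(2w_3, w(G))$, where $w(G)$ denotes the $(w_1,w_2)$-weight of $G$. Assume $(w_1+w_2+w_3-1)/w(f) < 5/6$: an elementary manipulation in both subcases $2w_3 \leq w(G)$ and $2w_3 > w(G)$ reduces this to the single inequality
\[
w(G) > 3(w_1 + w_2 - 1).
\]
Since $\mult_0(G) = 3$, some monomial $x^i y^j$ with $i+j=3$ appears in the cubic part of $G$, and its $(w_1,w_2)$-weight $i w_1 + j w_2$ is at least $w(G)$, yielding $(i-3)w_1 + (j-3)w_2 > -3$. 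For each of the four possibilities $(i,j) \in \{(3,0),(2,1),(1,2),(0,3)\}$, this is incompatible with $w_1, w_2 \geq 1$, which is the desired contradiction.

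The main technical obstacle is the uniform reduction (in both subcases) to the inequality $w(G) > 3(w_1+w_2-1)$: the manipulation is elementary but requires careful handling of the strict inequality $2w_3 > w(G)$ in the second subcase, where the auxiliary bound $w_3 > 3(w_1+w_2-1)/2$ must be extracted first. Once this inequality is in hand, the combinatorial elimination of the four cubic types $(i,j)$ is immediate and finishes the proof.
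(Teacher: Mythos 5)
Your reduction (Prokhorov's Theorem~\ref{T:5/6}, Lemma~\ref{L:reduction}, exclusion of the curve case, Kawakita's Theorem~\ref{T:Kawakita}), the deduction that $\mult_0(f)=2$ with $z^2$ the only quadratic monomial and $a<b$, and the elementary weight inequality for $z^2+G(x,y)$ with $\mult_0(G)=3$ are all correct. The genuine gap is in the decisive step. The equality $c=(a+b)/w(f)$ holds with $w(f)$ computed in the \emph{Kawakita coordinates}, i.e.\ in the coordinates in which the contraction realizing $c$ is the $(1,a,b)$-weighted blow up; but your lower bound on the ratios is proved for the splitting-lemma normal form $z^2+G(x,y)$, which lives in \emph{different} analytic coordinates (the change $z\mapsto z+h(x,y)$ with $\mult_0 h\geq 2$, together with a unit factor). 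In the new coordinates the Kawakita divisor need not be a weighted blow up at all, and formula~\eqref{E:ct} gives only the upper bound $c\leq(w_1+w_2+w_3-1)/w(f)$ in an arbitrary coordinate system; the lower bound $c\geq\min_w(\cdot)$ requires the threshold to be realized by a weighted blow up \emph{in those same coordinates}. This is exactly the pitfall the paper flags in Remark~\ref{R:Kawakita}. So ``the canonical threshold is invariant under this change'' is true but insufficient: what you would need, and do not prove, is that realizability by a weighted blow up survives the splitting-lemma change of coordinates.

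The gap is repairable while staying in the Kawakita coordinates, so the splitting lemma can be dispensed with. By Weierstrass preparation in $z$ write $f=U\cdot(z^2+a_1(x,y)z+a_2(x,y))$ with $U$ a unit; your quadratic-part analysis gives $\mult_0 a_1\geq 2$, $\mult_0 a_2\geq 3$, and your $w=(1,1,2)$ estimate (applied in the original coordinates, where it is a legitimate upper bound) forces the cubic part of $a_2$ to be nonzero, since otherwise $c\leq 3/4$. For the monomial valuation $v$ with weights $(1,a,b)$ the three groups of terms have disjoint monomial supports, so $v(f)\leq\min\{2b,\,v(a_2)\}\leq\min\{2b,\,3a\}$, and $\min\{2b,3a\}\leq\tfrac{6}{5}(a+b)$ for all $1\leq a<b$ (if $2b>\tfrac{6}{5}(a+b)$ then $b>\tfrac{3}{2}a$, whence $3a\leq\tfrac{6}{5}(a+b)$); hence $c=(a+b)/v(f)\geq 5/6$, the desired contradiction. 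With this repair your argument becomes a correct alternative to the paper's endgame, which instead compares Newton diagrams (Lemma~\ref{L:monoton}) with the models $x^3+y^3+z^3$, $x^2+y^4+z^4$, $x^2+y^3+z^6$, invokes the Du Val classification, and computes $\ct_0$ of $x^2+y^3+yz^n$ by an explicit induction.
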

\begin{proof}
Recall that if $X$ is singular, then $\ct_P(X,S)\leq 4/5$ 
(Theorem~\ref{T:5/6}), and if $S$ has non isolated singularities in a
neighborhood of $P$, then $\ct_P(X,S)\leq 1/2$ 
(subsection~\ref{SS:reduction}). Thus we may assume that $S$ is a
hypersurface in $\C^3$ with isolated singularity at the origin. Then
by Kawakita's Theorem~\ref{T:Kawakita} and subsection~\ref{SS:reduction}
the canonical threshold $\ct_0(\C^3,S)$ is achieved on some weighted
blow up.

Let $S$ be given in $\C^3$ by an equation $f=0$. We shall analyze the 
Newton diagram $\Gamma(f)$ of $f$ and show that the canonical threshold of 
$S$ can be $1$, $5/6$, $3/4$ or smaller. First note that if the Newton 
diagram of $f$ lies above the plane $\alpha+\beta+\gamma=3$, then 
$\ct_0(\C^3,S)\leq 2/3$. Indeed, in this case $\Gamma^+(f)$ is contained
in the extended Newton diagram of the singularity 
$$x^3+y^3+z^3=0$$
which has canonical threshold $2/3$. Thus
by Lemma~\ref{L:monoton} $\ct_0(\C^3,S)\leq 2/3$.

We see that the function $f$ necessarily has monomials of degree $2$.
If the second differential of $f$ has rank $2$ or $3$, $f$ is isomorphic
to a Du Val singularity of type $A_n$. In this case its canonical 
threshold is $1$. But the same holds even if the second differential of
$f$ has rank $1$ and $f$ has at least $2$ monomials of degree $2$.
Indeed, recall that the canonical threshold of $S$ depends only on
the Newton diagram $\Gamma(f)$. But then we can perturb the coefficients
of $f$ in such a way that the second differential becomes of rank $\geq 2$.
It follows that we can assume that $f$ has the form
$$f=x^2+\text{ terms of degree}\geq 3\,.$$
Moreover, making a substitution $x'=x\sqrt{1+\dots}$ (it does not violate
the property that the canonical threshold of $f$ is achieved on a weighted
blow up) we can assume that $x^2$ is the only monomial of $f$ containing 
$x$ with degree $\geq 2$.

Further, let us compare $f$ with the Brieskorn singularity
$$x^2+y^4+z^4=0\,.$$
Its canonical threshold is $3/4$ (see Section~\ref{S:Brieskorn}). It 
follows that if $\Gamma(f)$ lies above the plane $2\alpha+\beta+\gamma=4$, 
then $\ct_0(\C^3,S)\leq 3/4$. Therefore we can suppose that $f$ has a 
monomial of degree $3$. If this monomial is $y^2z$, we again conclude that 
$f$ is a Du Val singularity (this time of type $D_n$) and $\ct_0(\C^3,S)=1$.
Thus we assume
$$f=x^2+y^3+\text{ other terms of degree}\geq 3\,.$$

Next we compare $f$ with the Brieskorn singularity
$$x^2+y^3+z^6=0\,.$$
Its canonical threshold is $5/6$. Suppose that $f$ contains monomials
$x^\alpha y^\beta z^\gamma$ lying below the plane 
$3\alpha+2\beta+\gamma=6$. Possible monomials are $z^3$, $z^4$, $z^5$, 
$yz^2$, $yz^3$, $xz^2$. In this case $f$ is a Du Val singularity of
type $D_n$ or $E_n$ and its canonical threshold is $1$. Therefore
it remains to consider the case when $\Gamma(f)$ lies above the plane
$3\alpha+2\beta+\gamma=6$ and $\ct_0(\C^3,S)\leq 5/6$. Let us show that
in fact $\ct_0(\C^3,S)=5/6$.

Since we suppose that $f$ defines an isolated singularity, it has
monomials of the form $z^n$, $xz^n$, or $yz^n$. Hence we can estimate
the canonical threshold of $S$ from below comparing it with the
nondegenerate singularities
$$x^2+y^3+z^n=0\,,\; n\geq 6\,,$$
$$x^2+y^3+xz^n=0\,,\; n\geq 3\,,$$
or
$$x^2+y^3+yz^n=0\,,\; n\geq 4\,.$$ 
The first and the second singularity are easily seen to be isomorphic
to Brieskorn singularities with canonical threshold $5/6$. Let us prove
by a direct computation that the canonical threshold of the third
singularity $S'\subset\C^3$ is also $5/6$. 

Consider the weighted blow up $\sigma$ of $\C^3$ with weights $(3,2,1)$. 
The blown up variety $\widetilde{\C^3}$ is covered by $3$ affine charts
(see subsection~\ref{SS:wblowup}). In the first isomorphic to 
$\C^3/\Z_3(1,1,2)$ the strict transform $S'_{\widetilde{\C^3}}$ of $S'$ is
isomorphic to
$$1+y^3+x^{n-4}yz^n=0$$
and is nonsingular. In the second chart $\C^3/\Z_2(1,1,1)$ the strict
transform
$$x^2+1+y^{n-4}z^n=0$$
is again nonsingular. Note that the quotient singularities of the first 2
charts are terminal and $S'_{\widetilde{\C^3}}$ does not pass through
them. The third chart is isomorphic to $\C^3$ and the strict transform to
$$x^2+y^3+yz^{n-4}=0\,,$$
i.~e., to a singularity of the same form but with smaller $n$. Computing
discrepancy we get
$$K_{\widetilde{\C^3}}+(5/6)S'_{\widetilde{\C^3}}=
\sigma^*(K_{\C^3}+(5/6)S')\,.$$
Therefore by induction we show that $\ct_0(\C^3,S')=5/6$ and finish
the proof of our theorem.
\end{proof}

\end{document}